\documentclass[11pt, reqno]{amsart}
\usepackage{amssymb,latexsym,amsmath,amsfonts,mathdots,enumitem}
\usepackage{latexsym}
\usepackage[mathscr]{eucal}
\usepackage{colortbl,xcolor}
\usepackage{lmodern}
\usepackage{sansmathaccent}
\usepackage[latin1]{inputenc}
\usepackage{tikz}
\usepackage{physics}
\usetikzlibrary{shapes,arrows}
\allowdisplaybreaks
\usetikzlibrary{matrix,calc,shapes,arrows,positioning}
\pdfmapfile{+sansmathaccent.map}

\numberwithin{equation}{section}
\theoremstyle{plain}

\newtheorem{thm}{Theorem}[section]

\newtheorem{lem}[thm]{Lemma}
\newtheorem{prop}[thm]{Proposition}
\theoremstyle{definition}
\newtheorem{defn}[thm]{Definition}
\newtheorem{rem}[thm]{Remark}

\newtheorem{prob}[thm]{Problem}
\numberwithin{equation}{section}

\def\beq{\begin{eqnarray}}
	\def\eeq{\end{eqnarray}}
\def\beqa{\begin{eqnarray*}}
	\def\eeqa{\end{eqnarray*}}

\def\Ran{\operatorname{Ran}}

\def\beqn{\begin{equation}}
	\def\eeqn{\end{equation}}

\def\mg#1{}

\def\Ran{\operatorname{Ran}}

\renewcommand{\epsilon}{\varepsilon}
\renewcommand{\phi}{\varphi}

\renewcommand{\bf}[1]{\textbf{#1}}

\renewcommand{\sf}[1]{\textsf{#1}}

\numberwithin{equation}{section}
\allowdisplaybreaks[4] 

\setlist[enumerate]{font=\upshape,noitemsep, topsep=0pt} 
\setlist[itemize]{noitemsep, topsep=0pt}

\begin{document}
	
	\title{The Families of $\Gamma_n$-Contractions Having Non-commutative Fundamental Operators}
	\author{Bhaskar Paul}
	
	\address[B. Paul]{Department of Mathematics, IIT Bhilai, 6th Lane Road, Jevra, Chhattisgarh 491002}
	\email{B. Paul:bhaskarpaul@iitbhilai.ac.in }
	
	\subjclass[2010]{47A13, 47A20}
	
	\keywords{Symmetrized polydisc, unitary dilation, fundamental operators, complete spectral set}
	
	\maketitle
	
	\begin{abstract}
		We call an $n$-tuple of $2 \times 2$ matrices a ``\textit{$2 \times 2$ matrix $\Gamma_n$-contraction}'' if its joint eigenvalues (joint spectrum for a commuting tuple of matrices) are contained in $\Gamma_n$. In this short note on $\Gamma_n$-contraction, we produce two families of $2 \times 2$ matrix $\Gamma_n$-contractions whose fundamental operators do not satisfy the commutativity conditions and
		\begin{equation*}
			\begin{aligned}
				F_iF^*_{n-j} - F_jF^*_{n-i} = F^*_{n-j}F_i - F^*_{n-i}F_j, \quad 1 \le i, j \le n-1,
			\end{aligned}
		\end{equation*}
		which shows that the commutativity as well as the conditions on fundamental operators mentioned above are sufficient, but not necessary, for the existence of $\Gamma_n$-isometric dilation.
	\end{abstract}

	\section{Introduction}\label{Intro}
	
	Suppose $\Omega$ is a compact subset in $\mathbb{C}^n$ and $\mathcal{O}(\Omega)$ is the algebra of bounded holomorphic functions in some neighbourhood of $\Omega$. The algebra $\mathcal{O}(\Omega)$ is equipped with the sup norm $||\, . \,||_{\infty, \Omega}$, defined by
	\begin{equation*}
		\begin{aligned}
			||f||_{\infty, \Omega} &= \sup\{|f(z)| : z \in \Omega\}.
		\end{aligned}
	\end{equation*}
	A compact subset $\Omega$ of $\mathbb{C}^n$ is said to be a \textit{spectral set} for a commuting $n$-tuple of bounded operators $\mathbf{T} = (T_1, \dots, T_n)$ if $\sigma(\mathbf{T}) \subset \Omega$ and the homomorphism $\rho_{\mathbf{T}} : \mathcal{O}(\Omega) \to \mathcal{B}(\mathcal{H})$ defined by
	\begin{equation*}
		\begin{aligned}
			\rho_{\mathbf{T}}(1) &= I_{\mathcal{H}} \quad \text{and} \quad \rho_{\mathbf{T}}(z_i) = T_i, \quad 1 \le i \le n
		\end{aligned}
	\end{equation*}
	is contractive (see \cite{McCarthy 1, Nagy}). von Neumann introduced the notion of a spectral set for an operator on a Hilbert space. The following theorem says that the closed unit disc is a spectral set for every contraction.
	\begin{thm}[{\cite[Chapter 1, Corollary 1.2]{Nagy}}]\label{von Neumann Inequality}
		Let $T$ be a contraction on a
		Hilbert space. Then for any polynomial $p$,
		\begin{equation*}
			\begin{aligned}
				||p(T)|| &\le \sup\{|p(z)| : |z| \le 1\}.
			\end{aligned}
		\end{equation*}
	\end{thm}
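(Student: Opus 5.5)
The plan is to derive the von Neumann inequality from the Sz.-Nagy unitary dilation theorem. This reduces the estimate for $p(T)$ to the same estimate for a unitary operator, where it follows from the spectral theorem.

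Let $T \in \mathcal{B}(\mathcal{H})$ be a contraction. First I will construct a unitary $U$ on a larger Hilbert space $\mathcal{K} \supseteq \mathcal{H}$ with
\[
T^m = P_{\mathcal{H}} U^m|_{\mathcal{H}}, \qquad m \ge 0.
\]
The concrete route is to start with the defect operators $D_T = (I - T^*T)^{1/2}$ and $D_{T^*} = (I-TT^*)^{1/2}$. From these I would build the Sch\"affer-type unitary on $\mathcal{K} = \bigoplus_{k\in\mathbb{Z}} \mathcal{H}$, whose central $2\times 2$ block is the Halmos unitary
\[
\begin{pmatrix} T & D_{T^*} \\ D_T & -T^* \end{pmatrix}
\]
and which acts by shifts on the remaining summands.

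Verifying unitarity rests on the intertwining relation $T D_T = D_{T^*} T$. This relation follows from $T p(T^*T) = p(TT^*) T$ for polynomials $p$, extended to the square root by a limit of polynomials. Verifying the power-compression identity amounts to checking that the $(0,0)$ entry of $U^m$ is $T^m$. The shift structure makes this hold, because the extra paths through the defect spaces never return to the zeroth coordinate in positive time.

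By linearity, the compression identity gives $p(T) = P_{\mathcal{H}}\, p(U)|_{\mathcal{H}}$ for every polynomial $p$. Hence $\|p(T)\| \le \|p(U)\|$. Since $U$ is unitary, it is normal with $\sigma(U) \subseteq \mathbb{T}$. The continuous functional calculus then gives
\[
\|p(U)\| = \sup_{z \in \sigma(U)} |p(z)| \le \sup_{|z|=1}|p(z)|.
\]
This supremum equals $\sup_{|z|\le 1}|p(z)|$ by the maximum modulus principle.

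The main obstacle is the dilation construction itself, specifically checking that the infinite block matrix is unitary and that its powers compress correctly. If this becomes messy, I would fall back on a positivity argument. For $0<r<1$, define
\[
\Re\bigl((I+re^{-i\theta}T)(I-re^{-i\theta}T)^{-1}\bigr) = P_r(\theta, T).
\]
One checks that $P_r(\theta,T)$ equals $(I-re^{i\theta}T^*)^{-1}(I-r^2T^*T)(I-re^{-i\theta}T)^{-1}$, which is $\ge 0$ because $T$ is a contraction. This positivity yields a positive operator-valued measure on $\mathbb{T}$ whose moments are $T^m$, giving
\[
p(rT) = \int_{\mathbb{T}} p(e^{i\theta})\, d\mu(\theta).
\]
From this one gets $\|p(rT)\| \le \sup_{\mathbb{T}}|p|$ via Naimark dilation, or via the standard norm estimate for positive operator-valued measures on $\mathbb{T}$ with $\mu(\mathbb{T})=I$. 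Letting $r\to 1$ then gives the inequality for $p(T)$.
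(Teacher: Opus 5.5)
Your proposal is correct and is the standard argument: the paper gives no proof of its own and simply cites Sz.-Nagy--Foias, where the inequality follows, as in your main route, by compressing $p(U)$ for a unitary power dilation $U$ (the paper's Theorem~\ref{Nagy Dilation}, for which the paper mentions Sch\"affer's construction), bounding $\|p(U)\|\le\sup_{\mathbb{T}}|p|$ by the spectral theorem, and finishing with the maximum modulus principle. Two small corrections: the Halmos block must straddle the shift, for example sending coordinates $(0,-1)$ onto $(0,1)$ or the mirror arrangement, because a diagonal placement on $\mathcal{H}_0\oplus\mathcal{H}_1$ beside a shift gives only Halmos's one-step dilation (the $(0,0)$ entry of $U^2$ would be $T^2+D_{T^*}D_T$); and in your fallback the moments of $\frac{1}{2\pi}P_r(\theta,T)\,d\theta$ are $(rT)^m$, not $T^m$, which is in fact what your formula for $p(rT)$ uses.
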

	The following dilation theorem for a contraction is due to Sz.-Nagy.
	\begin{thm}[{\cite[Theorem 1.1]{paulsen}}]\label{Nagy Dilation}
		Let $T$ be a contraction operator on Hilbert space $\mathcal{H}$.  Then there is a Hilbert space $\mathcal{K}$ containing $\mathcal{H}$ as a subspace and a unitary operator $U$ on $\mathcal{K}$ such that
		\begin{equation}\label{Power Dilation}
			\begin{aligned}
				T^n &= P_{\mathcal{H}}U^n|_{\mathcal{H}}, \quad n \in \mathbb{N} \cup \{0\}.
			\end{aligned}
		\end{equation}
	\end{thm}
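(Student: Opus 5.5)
The plan is to construct the unitary dilation in two stages: first an isometric dilation of $T$, then an extension of that isometry to a unitary.

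\emph{Defect operators.} Since $\|T\|\le 1$, the operators $I-T^*T$ and $I-TT^*$ are positive. Put
\[
D_T=(I-T^*T)^{1/2},\qquad \mathcal{D}_T=\overline{\Ran}\,D_T .
\]
The identity $\|Th\|^2+\|D_Th\|^2=\|h\|^2$ holds for every $h\in\mathcal{H}$.

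\emph{Isometric dilation (Schäffer-type).} Let $\mathcal{K}_+=\mathcal{H}\oplus\ell^2(\mathcal{D}_T)$, and define
\[
V(h,d_0,d_1,\dots)=(Th,\,D_Th,\,d_0,\,d_1,\dots).
\]
By the norm identity above, $V$ is an isometry. The subspace $\ell^2(\mathcal{D}_T)$ is $V$-invariant, so $\mathcal{H}$ is co-invariant for $V$. Hence $P_{\mathcal{H}}V|_{\mathcal{H}}=T$, and more generally $P_{\mathcal{H}}V^n|_{\mathcal{H}}=T^n$. To check the latter, I will induct on $n$, using the fact that the $\mathcal{H}$-component of $V^n(h,0,\dots)$ is $T^nh$. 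The key point is that the first coordinate of $V(x)$ depends only on the first coordinate of $x$, so $V$ is lower triangular with respect to $\mathcal{H}\oplus\ell^2(\mathcal{D}_T)$.

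\emph{Unitary extension.} Every isometry $V$ extends to a unitary. One way is to take the Wold decomposition $V=S\oplus W$, with $S$ a unilateral shift of some multiplicity and $W$ unitary, and extend $S$ to the corresponding bilateral shift. An alternative that avoids Wold is the explicit Schäffer matrix on $\mathcal{K}=\ell^2_{-}(\mathcal{D}_{T^*})\oplus\mathcal{H}\oplus\ell^2(\mathcal{D}_T)$, whose central block is the unitary rotation matrix
\[
\begin{pmatrix} T & D_{T^*}\\ D_T & -T^*\end{pmatrix}.
\]
I will use the explicit Schäffer construction. Unitarity then follows from the intertwining relation $TD_T=D_{T^*}T$. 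That relation is proved by noting $T(I-T^*T)=(I-TT^*)T$ and passing to square roots via polynomial approximation of the square-root function.

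\emph{Compressions of powers.} For $U$ the resulting unitary on $\mathcal{K}\supseteq\mathcal{K}_+\supseteq\mathcal{H}$, the space $\mathcal{K}_+$ is $U$-invariant and $U|_{\mathcal{K}_+}=V$. Therefore $P_{\mathcal{H}}U^n|_{\mathcal{H}}=P_{\mathcal{H}}V^n|_{\mathcal{H}}=T^n$ for all $n\ge0$. The case $n=0$ is trivial.

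The main obstacle is verifying that the Schäffer matrix is unitary. This rests on the identity $TD_T=D_{T^*}T$, equivalently $D_{T^*}T^*=T^*D_{T^*}$ after taking adjoints. I will handle it by the functional-calculus argument sketched above: if $p_k(x)\to\sqrt{x}$ uniformly on $[0,1]$, then $Tp_k(I-T^*T)=p_k(I-TT^*)T$, and the limit gives the identity.
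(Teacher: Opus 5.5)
Your proposal is correct. It is Schäffer's explicit construction, which is all the paper offers here: the paper gives no proof of this classical result, only the citation to Paulsen and the remark attributing the explicit unitary to Schäffer.

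One slip to fix: the adjoint of $TD_T=D_{T^*}T$ is $D_TT^*=T^*D_{T^*}$, not $D_{T^*}T^*=T^*D_{T^*}$, which already fails for $T=\left[\begin{smallmatrix}0&1\\0&0\end{smallmatrix}\right]$. It is the pair $TD_T=D_{T^*}T$ and $T^*D_{T^*}=D_TT^*$ that kills the off-diagonal entries of $J^*J$ and $JJ^*$ for $J=\left[\begin{smallmatrix}T&D_{T^*}\\ D_T&-T^*\end{smallmatrix}\right]$. The same pair gives $T^*\mathcal{D}_{T^*}\subseteq\mathcal{D}_T$, so $J$ really maps $\mathcal{H}\oplus\mathcal{D}_{T^*}$ onto $\mathcal{H}\oplus\mathcal{D}_T$.
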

	The dilation presented in Theorem \ref{Nagy Dilation} is called \textit{power dilation} of $T$. An explicit construction of a dilation of this type of unitary for a given contraction is given by Sch\"{a}ffer (see \cite{Schaffer}).
	
	Let $F = ((f_{ij}))$ be a matrix-valued polynomial defined on $\Omega$. We say that $\Omega$ is a \emph{complete spectral set} for $\mathbf T$ if
	\begin{equation*}
		\begin{aligned}
			||F(\mathbf{T})|| &\le ||F||_{\infty, \Omega}
		\end{aligned}
	\end{equation*}
	for every $F\in \mathcal{O}(\Omega) \otimes M_{k \times k}(\mathbb{C}), k \ge 1$. A commuting $n$-tuple $\mathbf T$ for which $\Omega$ is a spectral set is said to admit a $\partial \Omega$-normal dilation if there exist a Hilbert space $\mathcal{K} \supseteq \mathcal{H}$ and a commuting $n$-tuple of normal operators
	$\mathbf{N} = (N_1, \dots, N_n)$
	on $\mathcal{K}$ satisfying $\sigma(\mathbf{N}) \subseteq \partial \Omega$ such that
	\begin{equation*}
		\begin{aligned}
			P_{\mathcal H}f(\mathbf{N})|_{\mathcal H} &= f(\mathbf{T}), \quad f \in \mathcal{O}(\Omega).
		\end{aligned}
	\end{equation*}
	The following theorem due to Arveson (see \cite[Corollary, p. 279]{Arveson}) connects the notion of complete spectral set and $\partial \Omega$-normal dilation.
	\begin{thm}\label{Thm Arveson}
		Let $\mathbf{T} = (T_1, \dots, T_n)$ have K as a spectral set. Then $\mathbf{T}$ has $K$ as a complete spectral set if and only if $\mathbf{T}$ has a $\partial K$-normal dilation.
	\end{thm}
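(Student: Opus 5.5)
The plan is to prove Theorem~\ref{Thm Arveson} in two directions, with Arveson's extension theorem and Stinespring's dilation theorem as the main tools. Throughout, $\mathcal{A}$ denotes the closure of $\mathcal{O}(K)$ (or of the polynomials) in $C(\partial K)$. By the maximum modulus principle, the sup norm over $K$ equals the sup norm over $\partial K$, so $\mathcal{A}$ can be regarded as a subalgebra of the commutative C$^*$-algebra $C(\partial K)$. The point is that this embedding is isometric at every matrix level, so complete contractivity of $\rho_{\mathbf T}$ on $\mathcal{A}$ is exactly the complete spectral set hypothesis.

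\emph{Easy direction.} Suppose $\mathbf{N}$ is a $\partial K$-normal dilation. Since $\mathbf N$ is a commuting normal tuple with $\sigma(\mathbf N)\subseteq\partial K$, the continuous functional calculus gives a $*$-homomorphism $\pi: C(\partial K)\to\mathcal B(\mathcal K)$. A $*$-homomorphism is completely contractive. For a matrix polynomial $F=((f_{ij}))$ we then have
\[
F(\mathbf T) = (I_k\otimes P_{\mathcal H})\,\pi^{(k)}(F)\,|_{\mathcal H^k},
\]
so $\|F(\mathbf T)\|\le\|\pi^{(k)}(F)\|\le \|F\|_{\infty,\partial K}=\|F\|_{\infty,K}$. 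Hence $K$ is a complete spectral set.

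\emph{Hard direction.} Assume $K$ is a complete spectral set, so $\rho_{\mathbf T}$ extends to a unital completely contractive map on $\mathcal{A}\subseteq C(\partial K)$. The steps are as follows.
\begin{enumerate}
\item Extend $\rho_{\mathbf T}$ to the operator system $\mathcal A+\mathcal A^*$ by $\tilde\rho(f+\bar g)=\rho(f)+\rho(g)^*$. The standard fact that a unital completely contractive map on a unital operator algebra extends uniquely to a completely positive map on $\mathcal A+\mathcal A^*$ shows that this is well defined and completely positive.
\item Apply Arveson's extension theorem to extend $\tilde\rho$ to a unital completely positive map $\Phi: C(\partial K)\to \mathcal B(\mathcal H)$.
\item Apply Stinespring's theorem to write $\Phi(f)=V^*\pi(f)V$, where $\pi$ is a unital $*$-representation on $\mathcal K$ and $V$ is an isometry, since $\Phi$ is unital. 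Identify $\mathcal H$ with $V\mathcal H\subseteq\mathcal K$.
\item Put $N_i=\pi(z_i)$. These operators commute and are normal, and $\sigma(\mathbf N)\subseteq\partial K$ because $\pi$ factors through $C(\partial K)$.
\item For $f\in\mathcal O(K)$ we get $P_{\mathcal H}f(\mathbf N)|_{\mathcal H}=\Phi(f)=\rho_{\mathbf T}(f)=f(\mathbf T)$. Here the identity $f(\mathbf N)=\pi(f)$ holds for polynomials by multiplicativity, and for general $f$ by uniform approximation on $\partial K$ together with continuity of the Taylor functional calculus.
\end{enumerate}

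\emph{Main obstacle.} The delicate step is step 1, the passage from complete contractivity to complete positivity on $\mathcal A+\mathcal A^*$. The first approach to try is the $2\times2$ off-diagonal trick. For the map $\rho$, a contractive $x$ corresponds to the positivity of $\begin{pmatrix}1&x\\x^*&1\end{pmatrix}$, and applying $\rho^{(2)}$ entrywise shows that $\tilde\rho$ is positive. The same argument at every matrix level $k$ gives complete positivity. Well-definedness follows from positivity and self-adjointness: if $f+\bar g=0$, then applying the map gives $0$.

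A secondary technical point is the holomorphic functional calculus when $\mathcal O(K)$ is not uniformly approximable by polynomials. In that case I would work directly with the algebra of restrictions of $\mathcal O(K)$ to $\partial K$ and use that the Taylor functional calculus is multiplicative and continuous.
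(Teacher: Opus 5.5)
The paper does not prove this statement; it quotes it from Arveson \cite[Corollary, p.~279]{Arveson}. Your outline (extend $\rho_{\mathbf T}$ to $\mathcal A+\mathcal A^*$, apply Arveson's extension theorem, dilate by Stinespring, compress) is exactly Arveson's argument, and the easy direction is fine, so the overall route is right. Two of the justifications you give, however, do not work as written. Both are fixable by standard facts.

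\textbf{Step 1.} The argument you offer under ``Main obstacle'' runs backwards.
\begin{itemize}
\item Applying $\tilde\rho^{(2)}$ to $\left[\begin{smallmatrix}1&x\\ x^*&1\end{smallmatrix}\right]$ presupposes that $\tilde\rho$ is already well defined.
\item The output $\left[\begin{smallmatrix}I&\rho(x)\\ \rho(x)^*&I\end{smallmatrix}\right]\ge 0$ only restates $\|\rho(x)\|\le 1$. It says nothing about $\tilde\rho(h)$ for a general positive $h\in\mathcal A+\mathcal A^*$. The $2\times 2$ trick proves ``unital and $2$-positive $\Rightarrow$ contractive'', which is the converse of what you need.
\item The claim ``if $f+\bar g=0$, applying the map gives $0$'' is circular.
\end{itemize}
The correct argument uses states. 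For a unit vector $\xi$, the map $a\mapsto\langle\rho(a)\xi,\xi\rangle$ is a unital functional of norm one on $\mathcal A$. Any Hahn--Banach extension of it to $C(\partial K)$ is unital of norm one, hence a state, hence self-adjoint and positive. This gives $\rho(\bar g)=\rho(g)^*$ whenever $g,\bar g\in\mathcal A$, which is exactly well-definedness, and it also gives positivity of $\tilde\rho$. Running the same argument for $\rho^{(k)}$ on $M_k(\mathcal A)\subseteq M_k(C(\partial K))$ gives complete positivity. Since you also cite this as a standard fact (see Arveson, or \cite{paulsen}), the citation suffices; drop the faulty sketch.

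\textbf{The identification $f(\mathbf N)=\pi(f|_{\partial K})$ for $f\in\mathcal O(K)$.} What you say does not establish it, and you need it in both directions. In the easy direction it is what lets you pass from $P_{\mathcal H}f(\mathbf N)|_{\mathcal H}=f(\mathbf T)$ to $F(\mathbf T)=(I_k\otimes P_{\mathcal H})\pi^{(k)}(F)|_{\mathcal H^k}$.
\begin{itemize}
\item Elements of $\mathcal O(K)$ need not be uniform limits of polynomials on $\partial K$. Take $K$ a closed annulus in $\mathbb C$ and $f(z)=1/z$.
\item Even when they are, continuity of the Taylor calculus is with respect to uniform convergence on a neighbourhood of the spectrum, not on $\partial K$.
\item ``Multiplicative and continuous'' alone does not determine $f(\mathbf N)$.
\end{itemize}
What you need is the standard fact that, for a commuting tuple of normal operators, the holomorphic (Taylor) calculus agrees with the spectral-theorem calculus $f\mapsto\int f\,dE$ on functions holomorphic near $\sigma(\mathbf N)$. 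One reason: Taylor's calculus agrees with the Arens--Calder\'on calculus in the commutative $C^*$-algebra generated by $\mathbf N$, whose Gelfand transform is evaluation on $\sigma(\mathbf N)$. Alternatively, follow Arveson and work with rational functions with poles off $K$, where $r(\mathbf N)=\pi(r)$ follows from multiplicativity and invertibility. For polynomially convex $K$ such as $\Gamma_n$, Oka--Weil approximation on a neighbourhood of $K$ rescues your polynomial argument.

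\textbf{A remark on the boundary.} Your proof uses nothing about $\partial K$ beyond the complete isometry $\mathcal A\hookrightarrow C(\partial K)$. Running it with the Shilov boundary of $\mathcal A$ in place of $\partial K$ yields a normal dilation with spectrum in the Shilov boundary. That stronger version is what the paper actually needs to obtain $\Gamma_n$-unitary dilations, whose spectrum lies in $b\Gamma_n$.
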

	
	Agler in \cite[Theorem 1.9]{Agler} proved the following result related to the complete contractivity of a unital algebra homomorphism from $\mathcal{O}(K)$ to $M_2(\mathbb{C})$, which we recall below. It plays a crucial role in proving our main results.
	
	\begin{thm}\label{Thm Agler}
		Let $K$ be a compact subset of $\mathbb{C}^n$. A unital $(\Pi(1) = 1)$ contractive $(||\Pi(f)|| \le ||f||)$ algebra homomorphism $\Pi : \mathcal{O}(K) \to M_2(\mathbb{C})$ is completely contractive.
	\end{thm}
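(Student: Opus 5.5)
The plan is to reduce complete contractivity of $\Pi$ to contractivity of a map into $\mathbb{C}$ by compressing matrix norms to vectors. Fix $k \ge 1$ and $F = ((f_{ij})) \in \mathcal{O}(K)\otimes M_{k\times k}(\mathbb{C})$, and write $\Pi^{(k)}(F) = ((\Pi(f_{ij})))$ acting on $\mathbb{C}^k\otimes\mathbb{C}^2$. For unit vectors $x,y \in \mathbb{C}^k\otimes\mathbb{C}^2$ we have
\begin{equation*}
\langle \Pi^{(k)}(F)x, y\rangle = \sum_{i,j} \langle \Pi(f_{ij})x_j, y_i\rangle, \qquad x_j, y_i \in \mathbb{C}^2,
\end{equation*}
so it suffices to show that this quantity is bounded by $\|F\|_{\infty,K}$.

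The first step is to exploit the structure of a unital homomorphism into $M_2(\mathbb{C})$. Up to similarity, the image is commutative and generated by $\Pi(z_1),\dots,\Pi(z_n)$, which are commuting $2\times 2$ matrices. There are two cases.
\begin{enumerate}
\item If they have two distinct joint eigenvalues $\lambda,\mu\in K$, then $\Pi(f) = S\,\mathrm{diag}(f(\lambda),f(\mu))\,S^{-1}$.
\item Otherwise there is a single joint eigenvalue $\lambda$, and $\Pi(f) = f(\lambda)I + \bigl(\sum_r \partial_r f(\lambda)\,a_r\bigr)N$ with $N$ nilpotent.
\end{enumerate}
In each case $\Pi$ factors through the two-dimensional quotient algebra $\mathcal{O}(K)/\mathcal{J}$, where $\mathcal{J}$ is the ideal of functions vanishing at $\lambda,\mu$ (respectively, vanishing to first order in the direction $a$ at $\lambda$). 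Contractivity of $\Pi$ then becomes a two-point (or one-point tangential) Pick-type condition on $K$.

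The second step is to use a dilation or extremal argument. Since the quotient algebra is two-dimensional, I would pick a contractive functional or the relevant two-point extremal problem and show that contractivity forces a representation of the matrix entries of $\Pi(f)$ of the form $\langle \Pi(f)e,e'\rangle = \int_K f\,d\nu$ for suitable measures. More concretely, I would try to show that contractivity of $\Pi$ is equivalent to the existence of a positive kernel decomposition: $1-\Pi(f)^*\Pi(f)$-type positivity for $\|f\|\le1$ on the two-point set gives a $2\times2$ Pick matrix condition. The aim is to realize $\Pi$ as $V^*\pi(\cdot)V$ with $\pi$ a $*$-representation of $C(\partial K)$ (or the uniform closure on the Shilov boundary) and $V$ a contraction; here Stinespring's theorem or Theorem \ref{Thm Arveson} would immediately give complete contractivity. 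The key lemma is Sarason/Agler's observation that for a two-dimensional quotient of a uniform algebra, the quotient norm equals the norm of a weighted Pick problem. By Ando-type results on $2\times2$ matrices (a contraction on $\mathbb{C}^2$ extends to a coisometric dilation compatible with the algebra structure), the quotient map is then completely isometric.

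The main obstacle is exactly this step: proving that on a two-dimensional quotient $\mathcal{O}(K)/\mathcal{J}$, the matricial quotient norms coincide with the scalar-induced ones, i.e.\ that every contractive representation of a two-dimensional operator algebra on $\mathbb{C}^2$ is completely contractive. I would attempt it by noting that a two-dimensional unital operator algebra is generated by one element $a$ satisfying $a^2 = ca + d$, and is thus completely isometric to the algebra of the single operator $a$. The representation is then determined by a single $2\times2$ matrix $T$ with $\|p(T)\|\le \|p(a)\|$. Reducing to $a$ similar to a contraction or a Jordan block, the required inequality becomes von Neumann's inequality (Theorem \ref{von Neumann Inequality}) combined with Theorem \ref{Nagy Dilation} applied to a suitable Möbius-transformed contraction, whose unitary power dilation yields complete contractivity.
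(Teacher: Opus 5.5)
\textbf{There is a genuine gap.} The paper gives no proof of this statement; it quotes it from Agler, so your sketch has to stand on its own. Your reduction to a two-dimensional quotient is fine: either two distinct joint eigenvalues $\lambda\neq\mu$, or one eigenvalue $\lambda$ with a tangent direction $a$. But the actual content of the theorem, $\|\Pi^{(k)}(F)\|\le\|F\|_{\infty,K}$ for matrix-valued $F$, is never derived.

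\begin{itemize}
\item Realizing $\Pi$ as $V^*\pi(\cdot)V$ is, by Theorem~\ref{Thm Arveson}, a restatement of the goal, not a step toward it.
\item Sarason's theorem concerns quotients of $H^\infty(\mathbb{D})$, not of $\mathcal{O}(K)$ for arbitrary $K$.
\item The parenthetical about ``Ando-type results'' (a contraction on $\mathbb{C}^2$ with a coisometric dilation compatible with the algebra structure) is the conclusion in disguise, not a citable fact.
\item The claim you isolate at the end is true: every contractive representation of a two-dimensional operator algebra is completely contractive. Your route to it fails, though. Saying $\mathcal{O}(K)/\mathcal{J}$ is ``completely isometric to the algebra of the single operator $a$'' presupposes that the quotient, with its matrix quotient norms, is concretely represented on a Hilbert space. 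That needs Blecher--Ruan--Sinclair or an explicit model.
\item ``Reducing to $a$ similar to a contraction or a Jordan block'' discards exactly the norm information you need, since similarities do not preserve norms.
\item Von Neumann's inequality and Theorem~\ref{Nagy Dilation}, applied to a contraction $T$, bound $\|P(T)\|$ by $\sup_{\overline{\mathbb{D}}}\|P\|$. Nothing in the proposal compares that quantity with $\|P(a)\|$ or with $\|F\|_{\infty,K}$ at the matrix level.
\end{itemize}

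\textbf{The missing idea} is to transfer matrix-valued data from $K$ to the disc through one scalar function whose image under $\Pi$ is a contraction. This is where the Carath\'eodory extremal problem, the subject of Agler's paper, enters.

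In the two-point case, take $h\in\mathcal{O}(K)$ with $\|h\|_{\infty,K}\le 1$, $h(\lambda)=0$, and $c=h(\mu)>0$ maximal. If the supremum is not attained, a nearly extremal $h$ suffices once $\|F\|_{\infty,K}<1$.
\begin{enumerate}
\item For $\|F\|_{\infty,K}<1$, put $W=F(\lambda)$ and let $\Phi_W(X)=(I-WW^*)^{-1/2}(X-W)(I-W^*X)^{-1}(I-W^*W)^{1/2}$ be the matrix M\"obius map with $\Phi_W(W)=0$.
\item For unit vectors $x,y$, the scalar function $\langle \Phi_W(F(\cdot))x,y\rangle$ has sup norm at most $1$ on $K$ and vanishes at $\lambda$. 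Extremality of $h$ therefore gives $\|\Phi_W(F(\mu))\|\le c$.
\item Hence $G(z)=\Phi_W^{-1}\bigl(\tfrac{z}{c}\Phi_W(F(\mu))\bigr)$ is holomorphic near $\overline{\mathbb{D}}$, with $\sup_{\overline{\mathbb{D}}}\|G\|\le 1$, $G(0)=F(\lambda)$ and $G(c)=F(\mu)$.
\item Since $\Pi(f)$ depends only on $(f(\lambda),f(\mu))$, we get $\Pi^{(k)}(F)=G(\Pi(h))$.
\item Contractivity gives $\|\Pi(h)\|\le\|h\|_{\infty,K}\le 1$, and Theorem~\ref{Nagy Dilation} then gives $\|G(\Pi(h))\|\le 1$.
\end{enumerate}
The tangential case is the same argument with the Carath\'eodory--Reiffen extremal problem $\sup\{|\sum_r a_r\partial_r g(\lambda)| : \|g\|_{\infty,K}\le 1,\ g(\lambda)=0\}$ and $G(z)=\Phi_W^{-1}(zV)$ for a suitable $\|V\|\le 1$.

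Alternatively, if you grant Blecher--Ruan--Sinclair, your reformulation can be proved directly. For a nontrivial idempotent $e$, the norm of $A\otimes 1+B\otimes e$ equals $\bigl\|\bigl[\begin{smallmatrix}A+B & sB\\ 0 & A\end{smallmatrix}\bigr]\bigr\|$ with $s^2=\|e\|^2-1$. This is an even convex, hence nondecreasing, function of $s$. The same holds with $\bigl[\begin{smallmatrix}A & sB\\ 0 & A\end{smallmatrix}\bigr]$ and $s=\|n\|$ for a square-zero generator $n$. So contractivity on the generator already forces complete contractivity.
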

	
	The domain $\mathbb{G}_n (n \ge 2)$ is the image of $\mathbb{D}^n$ under the \textit{symmetrization map} $\mathbf{s} := (s_1, \dots, s_n) : \mathbb{C}^n \to \mathbb{C}^n$ defined by
	\begin{equation*}
		\begin{aligned}
			s_i(z_1, \dots, z_n) &=
			\sum_{1 \le k_1 < \dots < k_i \le n}
			z_{k_1} \cdots z_{k_i},
			\quad 1 \le i \le n.
		\end{aligned}
	\end{equation*}
	The closure of $\mathbb{G}_n$ is denoted and defined by $\Gamma_n = \mathbf{s}(\overline{\mathbb{D}}^n)$. The detailed study on $\mathbb{G}_n$ and $\Gamma_n$ can be found in \cite{Costara}. Biswas and Shyam Roy have done operator theory on $\Gamma_n$ in \cite{Biswas}. From there we recall the definitions of $\Gamma_n$-contraction, $\Gamma_n$-unitary, and $\Gamma_n$-isometry.
	\begin{defn}
		Let $(S_1, \dots, S_n)$ be a commuting $n$-tuple of bounded operators acting on some Hilbert space $\mathcal{H}$. We call $(S_1, \dots, S_n)$ is
		\begin{enumerate}
			\item a $\Gamma_n$-contraction if $\Gamma_n$ is a spectral set for $(S_1, \dots, S_n)$.
			
			\item a $\Gamma_n$-unitary if $S_1, \dots, S_n$ are normal operators on $\mathcal{H}$ and the joint spectrum $\sigma(S_1, \dots, S_n) \subseteq b\Gamma_n$.
			
			\item a $\Gamma_n$-isometry if there exists a Hilbert space $\mathcal{K} \supseteq \mathcal{H}$ and a $\Gamma_n$-unitary $(N_1, \dots, N_n)$ acting on $\mathcal{K}$ such that $\mathcal{H}$ is a common invariant subspace for $N_1, \dots, N_n$ and $S_i = N_i|_{\mathcal{H}}$ for $1 \le i \le n$. 
		\end{enumerate}
	\end{defn}
	
	The following theorem is a straightforward consequence of Theorem \ref{Thm Agler}. Since the existence of a $\Gamma_n$-isometric dilation ensures the existence of a $\Gamma_n$-unitary dilation, we can state the following theorem for $\Gamma_n$-isometric dilation.
	\begin{thm}\label{2 times 2 Gamma_n-Contraction}
		Every $2 \times 2$ matrix $\Gamma_n$-contraction has a $\Gamma_n$-isometric dilation.
	\end{thm}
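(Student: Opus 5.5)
The plan is to pass from the spectral-set hypothesis to complete contractivity via Agler's theorem (Theorem \ref{Thm Agler}), then to a normal boundary dilation via Arveson's theorem (Theorem \ref{Thm Arveson}), and finally to a $\Gamma_n$-isometric dilation.

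Let $\mathbf S=(S_1,\dots,S_n)$ be a $2\times 2$ matrix $\Gamma_n$-contraction on $\mathcal H=\mathbb C^2$. By definition, $\Gamma_n$ is a spectral set for $\mathbf S$. So $\sigma(\mathbf S)\subseteq\Gamma_n$, and the unital homomorphism $\rho_{\mathbf S}:\mathcal O(\Gamma_n)\to M_2(\mathbb C)$ with $\rho_{\mathbf S}(z_i)=S_i$ is contractive. This is exactly the setting of Theorem \ref{Thm Agler} with $K=\Gamma_n$, which is compact since it is the image of $\overline{\mathbb D}^n$ under the continuous map $\mathbf s$. Hence $\rho_{\mathbf S}$ is completely contractive, so $\Gamma_n$ is a complete spectral set for $\mathbf S$.

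Next, Theorem \ref{Thm Arveson} gives a $\partial\Gamma_n$-normal dilation. There is a Hilbert space $\mathcal K\supseteq\mathcal H$ and a commuting normal tuple $\mathbf N=(N_1,\dots,N_n)$ with $\sigma(\mathbf N)\subseteq\partial\Gamma_n$ and $P_{\mathcal H}f(\mathbf N)|_{\mathcal H}=f(\mathbf S)$ for all $f\in\mathcal O(\Gamma_n)$.

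This is the main obstacle: a $\Gamma_n$-unitary requires spectrum in the distinguished boundary $b\Gamma_n$, not merely the topological boundary $\partial\Gamma_n$. I would fix this by invoking Arveson's dilation in its sharper Shilov-boundary form. The completely contractive map $\rho_{\mathbf S}$ extends to a unital completely positive map on $C(b\Gamma_n)$, since $\mathcal O(\Gamma_n)$ restricts isometrically to a subalgebra of $C(b\Gamma_n)$, $b\Gamma_n$ being the Shilov boundary of $\Gamma_n$ (see \cite{Costara, Biswas}). Applying Stinespring's theorem then yields the normal dilation with spectrum in $b\Gamma_n$. This gives a $\Gamma_n$-unitary dilation $\mathbf N$ of $\mathbf S$ in the sense that $f(\mathbf S)=P_{\mathcal H}f(\mathbf N)|_{\mathcal H}$ for polynomials $f$.

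Finally, I pass from the $\Gamma_n$-unitary dilation to a $\Gamma_n$-isometric one. By Sarason's lemma, $\mathcal H$ is semi-invariant for the algebra generated by $\mathbf N$, so $\mathcal H=\mathcal M\ominus\mathcal M'$ with $\mathcal M'\subseteq\mathcal M$ both invariant. Let $\mathcal M$ be the smallest $\mathbf N$-invariant subspace containing $\mathcal H$, namely $\overline{\operatorname{span}}\{p(\mathbf N)h : h\in\mathcal H,\ p \text{ a polynomial}\}$. Then $\mathbf V=\mathbf N|_{\mathcal M}$ is a $\Gamma_n$-isometry by definition, and it still dilates $\mathbf S$:
\[
P_{\mathcal H}p(\mathbf V)|_{\mathcal H}=P_{\mathcal H}p(\mathbf N)|_{\mathcal H}=p(\mathbf S).
\]
This is what the remark before the statement uses, read in reverse, and it completes the proof.
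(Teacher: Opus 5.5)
Your proof is correct and takes the same route the paper intends: Agler's theorem makes $\rho_{\mathbf S}$ completely contractive, Arveson's dilation theory gives a normal dilation, and restricting it to the smallest joint invariant subspace containing $\mathcal{H}$ gives the $\Gamma_n$-isometric dilation. You also fill in two points the paper leaves implicit. First, you get spectrum in $b\Gamma_n$ rather than just $\partial\Gamma_n$, via Arveson's extension theorem and Stinespring's theorem on $C(b\Gamma_n)$; this works because matrix-valued sup norms over $\Gamma_n$ are attained on $b\Gamma_n$. Second, you supply the unitary-to-isometric direction, whereas the paper's remark states the converse implication (and your appeal to Sarason's lemma is harmless but not needed).
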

	
	Let $T$ be a contraction on Hilbert space $\mathcal{H}$. Define the \textit{defect operator} and \textit{defect space} of $T$ by
	\begin{equation*}
		\begin{aligned}
			D_T &= (I - T^*T)^{1/2} \quad \text{and} \quad \mathcal{D}_T = \overline{\Ran}D_T
		\end{aligned}
	\end{equation*}
	respectively. For $1 \le i \le n - 1$, the \textit{fundamental equations} for a $\Gamma_n$-contraction $(S_1, \dots, S_n)$ are the following:
	\begin{equation}\label{Gamma_n Fundamental}
		\begin{aligned}
			S_i - S^*_{n-i}S_n = D_{S_n}F_iD_{S_n} \quad \text{and} \quad S_{n-i} - S^*_iS_n = D_{S_n}F_{n-i}D_{S_n}.
		\end{aligned}
	\end{equation}
	We refer to $F_1, \dots, F_{n-1}$ the \textit{fundamental operators} for $(S_1, \dots, S_n)$. The detailed study related to the fundamental equations and operators can be found in \cite{S. Pal, A. Pal}. We recall \cite[Theorem 4.4]{A. Pal}, which establishes the existence and uniqueness of the solutions of the fundamental equations.
	
	\begin{thm}[Existence and Uniqueness]\label{Existence and Uniqueness}
		For $n \ge 2$, let $(S_1, \dots, S_n)$ be a $\Gamma_n$-contraction on a Hilbert space $\mathcal{H}$. Then there are unique operators $F_1, \dots, F_{n-1} \in \mathcal{B}(D_{S_n})$ such that $S_i - S^*_{n-i}S_n = D_{S_n}F_iD_{S_n}$ and $S_{n-i} - S^*_iS_n = D_{S_n}F_{n-i}D_{S_n}, F_i, F_{n-i} \in \mathcal{B}(D_{S_n})$, for $1 \le i \le n - 1$. Moreover, $w(F_i + F_{n-i}z) \le \binom{n-1}{i} + \binom{n-1}{n-i}$ for all $z \in \mathbb{T}$.
	\end{thm}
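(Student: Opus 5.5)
The statement to prove is Theorem \ref{Existence and Uniqueness}: existence and uniqueness of fundamental operators $F_1,\dots,F_{n-1}\in\mathcal B(\mathcal D_{S_n})$, together with the numerical radius bound $w(F_i+F_{n-i}z)\le\binom{n-1}{i}+\binom{n-1}{n-i}$ for $z\in\mathbb T$.

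\textbf{Uniqueness.} This part is immediate. If $D_{S_n}(F_i-F_i')D_{S_n}=0$ with $F_i-F_i'\in\mathcal B(\mathcal D_{S_n})$, then $\langle (F_i-F_i')D_{S_n}h,D_{S_n}k\rangle=0$ for all $h,k\in\mathcal H$. Since $D_{S_n}\mathcal H$ is dense in $\mathcal D_{S_n}$, this gives $F_i=F_i'$.

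\textbf{Existence via a one-variable reduction.} I would use the standard criterion: for a contraction $P$ and a bounded operator $X$, if
\[
w(X)\le c \quad\text{and}\quad X-X^*P\ \text{has the right structure},
\]
one seeks $F$ with $X-X^*P=D_PFD_P$ and $w(F)\le c$. The known route (Bhattacharyya--Pal--Shyam Roy for $n=2$) runs as follows.

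First, I would show that for each $z\in\mathbb T$ the pair
\[
\Big(\tfrac{S_i+zS_{n-i}}{c_i},\ zS_n\Big), \qquad c_i=\binom{n-1}{i}+\binom{n-1}{n-i},
\]
behaves like a $\Gamma_2$-type object, in the sense that the operator inequality
\[
\rho_z:=c_i^2(I-S_n^*S_n)-(S_i+zS_{n-i})^*(S_i+zS_{n-i})+c_i\bigl[(S_i+zS_{n-i})-(S_i+zS_{n-i})^*zS_n\bigr]\text{-type positivity}
\]
holds. Concretely, I would prove positivity of
\[
\Phi(\alpha,z)=c_i^2\bigl(I-S_n^*S_n\bigr)+c_i\,\mathrm{Re}\,\alpha\bigl(S_i+zS_{n-i}-c_i^{-1}\ldots\bigr)\ge 0 .
\]
This positivity follows from the spectral-set hypothesis. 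The scalar polynomial
\[
f_{\alpha,z}(s)=\frac{c_i\,\alpha zs_n-(s_i+zs_{n-i})}{c_i-\alpha(s_i+zs_{n-i})}
\]
has modulus at most $1$ on $\Gamma_n$ for $|\alpha|<1$, using $|s_i+zs_{n-i}|\le c_i$ and the characterization of $\Gamma_n$ from \cite{Costara}. Von Neumann-type reasoning (the hereditary functional calculus applied to $1-|f|^2$) then yields the operator inequality.

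Second, from this inequality I would deduce
\[
\mathrm{Re}\,\alpha\,(X_z-X_z^*S_n)\le c_i\,D_{S_n}^2 \quad\text{for } X_z=S_i+zS_{n-i}.
\]
By a Douglas-lemma argument, this produces a unique $G_z\in\mathcal B(\mathcal D_{S_n})$ with
\[
X_z-X_z^*\,zS_n=D_{S_n}G_zD_{S_n}, \qquad w(G_z)\le c_i .
\]
The Douglas-type step is done first for the real part of $\alpha G$, via the quadratic form on $\mathrm{Ran}\,D_{S_n}$, and then extended by polarization.

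Third, I would expand the identity for $X_z$:
\[
X_z-zX_z^*S_n=(S_i-S_{n-i}^*S_n)+z(S_{n-i}-S_i^*S_n).
\]
Taking $z=1$ and $z=-1$ and then averaging separates $G_z=F_i+zF_{n-i}$, with $F_i=\tfrac12(G_1+G_{-1})$ and $F_{n-i}=\tfrac12(G_1-G_{-1})$. Uniqueness then forces $G_z=F_i+zF_{n-i}$ for every $z\in\mathbb T$, which gives both the existence and the numerical radius bound.

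\textbf{Main obstacle.} The hardest step is the first: establishing the scalar bound $|f_{\alpha,z}|\le1$ on $\Gamma_n$ with the precise constant $c_i$. This needs the description of $\Gamma_n$ in terms of $(s_i+zs_{n-i})$ and $s_n$; I would attempt it through the elementary estimate $|s_i(\lambda)|\le\binom ni$ together with the factorization of $s_i+zs_{n-i}$ over $\overline{\mathbb D}^n$.
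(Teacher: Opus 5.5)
The paper does not prove this statement; it only quotes it from \cite[Theorem 4.4]{A. Pal}. Most of your architecture is the standard one and is sound:
\begin{itemize}
\item uniqueness by density of $\Ran D_{S_n}$ in $\mathcal D_{S_n}$;
\item the polarization argument turning $\mathrm{Re}\,\alpha\,(X_z-X_z^*zS_n)\le c_iD_{S_n}^2$ for $\alpha\in\mathbb T$ into $X_z-X_z^*zS_n=D_{S_n}G_zD_{S_n}$ with $w(G_z)\le c_i$ (in your second step the factor $z$ in front of $S_n$ is missing; recall $D_{zS_n}=D_{S_n}$);
\item the separation of $F_i,F_{n-i}$ via $z=\pm1$, followed by uniqueness.
\end{itemize}
The gap is your first step, which is the only place the $\Gamma_n$ hypothesis enters. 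As written it is false, and the correct version is not proved. The displayed $\rho_z$ and $\Phi(\alpha,z)$ contain unspecified terms, so the only concrete content of that step is the test function $f_{\alpha,z}$.

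\textbf{Why your first step fails.} Note $c_i=\binom{n-1}{i}+\binom{n-1}{i-1}=\binom ni$. Take $s=\mathbf s(1,\dots,1)$ and $z=1$. Then $s_i+s_{n-i}=2c_i$ and $s_n=1$, so your estimate $|s_i+zs_{n-i}|\le c_i$ fails. At that point $f_{\alpha,1}(s)=\frac{\alpha-2}{1-2\alpha}$, which equals $-2$ at $\alpha=0$ and has a pole at $\alpha=1/2$; so $f_{\alpha,z}$ is not even in $\mathcal O(\Gamma_n)$ for all $|\alpha|<1$.

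What went wrong: your $f_{\alpha,z}$ is the Agler--Young function $\frac{2\alpha p-s}{2-\alpha s}$ evaluated at the pair $\bigl(2X/c_i,\,zs_n\bigr)$, where $X=s_i+zs_{n-i}$, and that pair does not lie in $\Gamma_2$. If your bound held, it would give $w(F_i+zF_{n-i})\le c_i/2$. That is false: for the scalar $\Gamma_n$-contractions $\mathbf s(t,\dots,t)$ with $t\to1^-$, one has $F_i+F_{n-i}\to c_i$. It is also inconsistent with the constant you use in your second step.

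\textbf{The correct version.} The right test function is
\[
\frac{2c_i\alpha zs_n-X}{2c_i-\alpha X},
\]
and what must be shown is that $\bigl(\frac{s_i+zs_{n-i}}{c_i},\,zs_n\bigr)\in\Gamma_2$ for all $s\in\Gamma_n$ and $z\in\mathbb T$. The bound $|s_i|\le\binom ni$ only gives the easy half, $|S|\le 2$. The real content is
\[
\bigl|(s_i-\bar s_{n-i}s_n)+z(s_{n-i}-\bar s_is_n)\bigr|\le c_i(1-|s_n|^2),
\]
which your proposal does not supply. Either of the following gives it.
\begin{itemize}
\item Schur--Cohn reduction from \cite{Costara}: $s\in\mathbb G_n$ if and only if $|s_n|<1$ and $\bigl(\frac{s_j-\bar s_{n-j}s_n}{1-|s_n|^2}\bigr)_{j=1}^{n-1}\in\mathbb G_{n-1}$. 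Hence the $j$-th entry has modulus at most $\binom{n-1}{j}$, which is exactly where the constant $\binom{n-1}{i}+\binom{n-1}{n-i}$ in the statement comes from.
\item Maximum principle: on $\mathbb T^n$ one has $s_{n-i}=\bar s_is_n$, so $\pi_z\colon s\mapsto\bigl(\frac{s_i+zs_{n-i}}{c_i},zs_n\bigr)$ maps $b\Gamma_n$ into $b\Gamma_2$. Then for every polynomial $q$,
\[
\|q(\pi_z(\mathbf S))\|\le\sup_{\Gamma_n}|q\circ\pi_z|=\sup_{b\Gamma_n}|q\circ\pi_z|\le\sup_{\Gamma_2}|q|,
\]
so $\pi_z(\mathbf S)$ is a $\Gamma_2$-contraction. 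The Bhattacharyya--Pal--Shyam Roy theorem then gives $G$ with $w(G)\le1$ and $\frac1{c_i}(X_z-X_z^*zS_n)=D_{S_n}GD_{S_n}$, i.e.\ your $G_z=c_iG$. This bypasses the test-function computation entirely.
\end{itemize}
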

	
	Let $F_1, \dots, F_{n-1} \in \mathcal{B}(D_{S_n})$ be the fundamental operators of a $\Gamma_n$-contraction that satisfy the following condition:
	\begin{equation}\label{Commutativity Condition}
		\begin{aligned}
			F_iF_j = F_jF_i \quad \text{and} \quad F_iF^*_{n-j} - F_jF^*_{n-i} = F^*_{n-j}F_i - F^*_{n-i}F_j, \quad 1 \le i, j \le n-1.
		\end{aligned}
	\end{equation}
	These conditions play a vital role in constructing the conditional dilation of a $\Gamma_n$-contraction (see \cite[Theorem 6.6, Corollary 6.7]{A. Pal}). From here one can think of the following problem.
	\begin{prob}\label{Prob 1}
		\textit{Can the conditions in \eqref{Commutativity Condition} be relaxed or removed while obtaining a $\Gamma_n$-isometric dilation? Equivalently, whether the conditions in \eqref{Commutativity Condition} are necessary for the existence of a $\Gamma_n$-isometric dilation.}
	\end{prob}
	
	Mandal and Pal in \cite[Example 2.12, Example 3.3]{Mandal} showed that the condition
	\begin{equation*}
		\begin{aligned}
			F_iF^*_{n-j} - F_jF^*_{n-i} = F^*_{n-j}F_i - F^*_{n-i}F_j, \quad 1 \le i, j \le n-1
		\end{aligned}
	\end{equation*}
	is not necessary, in general. But it remains an open question whether the commutativity of the fundamental operators is necessary. Bhowmik in \cite[Theorem 2.3]{Bhowmik} demonstrated a family of tetrablock contractions (see \cite{Bhattacharyya}) that possess tetrablock-unitary dilation, while their fundamental operators are non-commutative. Motivated by his work, we present a family of $2 \times 2$ matrix $\Gamma_n$-contractions that extend to $\Gamma_n$-isometric dilations, which gives an answer to Problem \ref{Prob 1}.

	\section{The Main Results}
	
	In this section, we produce two families of $2 \times 2$ matrix $\Gamma_n$-contractions that have non-commuting fundamental operators. Before going into the main result, we recall a result given by Knese.
	\begin{prop}[{\cite[Proposition 1.1]{Knese}}]\label{Prop Knese}
		If $f : \mathbb{D}^n \to \mathbb{D}$ is holomorphic, then
		\begin{equation*}
			\begin{aligned}
				\sum_{i=1}^{n} (1 - |z_i|^2)\left|\frac{\partial f}{\partial z_i}(z)\right| &\le 1 - |f(z)|^2
			\end{aligned}
		\end{equation*}
		for any $z = (z_1, \dots, z_n) \in \mathbb{D}^n$.
	\end{prop}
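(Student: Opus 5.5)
The plan is to reduce to the origin using disc automorphisms, and then apply the one-variable Schwarz lemma along a suitably chosen complex line through the origin. This is the standard route to Schwarz–Pick type estimates, and it will turn the weighted sum in the statement into a single derivative bounded by $1$.

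\textbf{Step 1: reduction to the origin.} Fix $z=(z_1,\dots,z_n)\in\mathbb{D}^n$ and set $a=f(z)\in\mathbb{D}$. For each $i$ I define the disc automorphism $\psi_i(w)=\frac{z_i+w}{1+\overline{z_i}w}$, so that $\psi_i(0)=z_i$ and $\psi_i'(0)=1-|z_i|^2$. I also define $\varphi_a(\zeta)=\frac{\zeta-a}{1-\overline{a}\zeta}$, so that $\varphi_a(a)=0$ and $\varphi_a'(a)=\frac{1}{1-|a|^2}$. Put
\begin{equation*}
g(w_1,\dots,w_n)=\varphi_a\bigl(f(\psi_1(w_1),\dots,\psi_n(w_n))\bigr).
\end{equation*}
Then $g:\mathbb{D}^n\to\mathbb{D}$ is holomorphic with $g(0)=0$. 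By the chain rule,
\begin{equation*}
\frac{\partial g}{\partial w_i}(0)=\frac{1-|z_i|^2}{1-|f(z)|^2}\,\frac{\partial f}{\partial z_i}(z).
\end{equation*}
So the claimed inequality is equivalent to $\sum_{i=1}^n\bigl|\frac{\partial g}{\partial w_i}(0)\bigr|\le 1$.

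\textbf{Step 2: Schwarz lemma on a diagonal slice.} Choose unimodular constants $\lambda_i\in\mathbb{T}$ with $\lambda_i\frac{\partial g}{\partial w_i}(0)=\bigl|\frac{\partial g}{\partial w_i}(0)\bigr|$; take $\lambda_i=1$ when the derivative vanishes. Define $h(\zeta)=g(\lambda_1\zeta,\dots,\lambda_n\zeta)$ for $\zeta\in\mathbb{D}$. Then $h$ is a holomorphic self-map of $\mathbb{D}$ with $h(0)=0$, so the Schwarz lemma gives $|h'(0)|\le 1$. On the other hand,
\begin{equation*}
h'(0)=\sum_{i=1}^n\lambda_i\frac{\partial g}{\partial w_i}(0)=\sum_{i=1}^n\Bigl|\frac{\partial g}{\partial w_i}(0)\Bigr|.
\end{equation*}
This is exactly the reduced inequality from Step 1. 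Multiplying back by $1-|f(z)|^2$ yields the statement.

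\textbf{Main obstacle.} There is no real obstacle here; the argument is essentially bookkeeping. The one point needing care is the choice of rotations $\lambda_i$: they are what convert a bound on $|h'(0)|$, a modulus of a sum, into a bound on the sum of moduli. The rest is verifying the chain-rule factors $\psi_i'(0)$ and $\varphi_a'(a)$ and checking that all the maps involved are self-maps of the appropriate polydisc or disc.
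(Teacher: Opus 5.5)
Your argument is correct and complete: the chain-rule factors $\psi_i'(0)=1-|z_i|^2$ and $\varphi_a'(a)=1/(1-|a|^2)$ are right, and choosing the rotations $\lambda_i$ turns the Schwarz bound $|h'(0)|\le 1$ into exactly $\sum_i |\partial g/\partial w_i(0)|\le 1$. The paper gives no proof of its own here (it only quotes Knese's Proposition 1.1), and your approach (move $z$ to $0$ and $f(z)$ to $0$ by disc automorphisms, then apply the Schwarz lemma on the slice $\zeta\mapsto g(\lambda_1\zeta,\dots,\lambda_n\zeta)$) is the standard proof of that cited result.
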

	
	We denote the set $\{(rz_1, \dots, rz_n) : (z_1, \dots, z_n) \in \mathbb{G}_n\}$ by $r\mathbb{D}^n$. Notice that $\mathbf{0} = (0, \dots, 0) \in \mathbb{G}_n$ as it satisfies the $|f(z)| < 1$. Then $\mathbf{0}$ is an interior point of $\mathbb{G}_n$. Thus, there exists $r \in (0, 1)$ such that $r\overline{\mathbb{D}}^n \subseteq \mathbb{G}_n$. Hence, we obtain the following lemma.
	\begin{lem}\label{Lem 1}
		There exists an $r \in (0, 1)$ such that $r\overline{\mathbb{D}}^n \subseteq \mathbb{G}_n$.
	\end{lem}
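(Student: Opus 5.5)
The plan is to show that the origin $\mathbf{0}$ is an interior point of $\mathbb{G}_n$. Once that is known, some open Euclidean ball $B(\mathbf{0},\delta)$ lies in $\mathbb{G}_n$. The closed polydisc $r\overline{\mathbb{D}}^n=\{w:|w_i|\le r\}$ is contained in the Euclidean ball of radius $r\sqrt n$. So choosing $r\in(0,1)$ with $r\sqrt n<\delta$ gives $r\overline{\mathbb{D}}^n\subseteq \mathbb{G}_n$, which is the statement.

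To prove that $\mathbf{0}$ is interior, I will avoid relying on an openness statement about $\mathbb{G}_n$ and instead give a direct root-location argument. First recall that $\mathbb{G}_n=\mathbf{s}(\mathbb{D}^n)$. A point $w=(w_1,\dots,w_n)$ lies in $\mathbb{G}_n$ exactly when all roots of the polynomial
$$p_w(\lambda)=\lambda^n-w_1\lambda^{n-1}+w_2\lambda^{n-2}-\cdots+(-1)^nw_n$$
lie in $\mathbb{D}$. This holds because, by Vieta's formulas, the roots $z_1,\dots,z_n$ of $p_w$ satisfy $\mathbf{s}(z)=w$.

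Next, bound the roots. Suppose $|w_i|\le r$ for all $i$, and let $\lambda$ be a root of $p_w$ with $|\lambda|\ge 1$. Then
$$|\lambda|^n\le\sum_{i=1}^n|w_i||\lambda|^{n-i}\le nr|\lambda|^{n-1},$$
so $|\lambda|\le nr$. Choosing any $r<1/n$ makes this impossible. Hence every root lies in $\mathbb{D}$, and therefore $w\in\mathbb{G}_n$.

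This argument actually gives the lemma directly, with the explicit value $r=1/(2n)$, say, and without passing through the Euclidean ball. There is no real obstacle. The only point needing care is the characterization of $\mathbb{G}_n$ through the roots of $p_w$, which follows from the fundamental theorem of algebra together with Vieta's formulas.
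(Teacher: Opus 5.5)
Your proof is correct, and it is more explicit than the paper's. The paper argues qualitatively. It notes that $\mathbf{0}\in\mathbb{G}_n$, asserts that $\mathbf{0}$ is therefore an interior point (tacitly using that $\mathbb{G}_n$ is open), and concludes that some $r$ exists, with no information on its size. That route is short and works for any open set containing the origin.

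You instead prove the containment directly. You use only two facts: $w\in\mathbb{G}_n$ whenever all roots of $p_w$ lie in $\mathbb{D}$ (fundamental theorem of algebra plus Vieta), and the root estimate $|\lambda|\le nr$ for any root with $|\lambda|\ge 1$. This makes the lemma self-contained, since it supplies the local openness at the origin that the paper takes for granted. It also gives an explicit radius, which makes the smallness hypotheses in the two main theorems checkable.

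Your range $r<1/n$ is in fact sharp. The point $w$ with $w_i=(-1)^{i+1}/n$ lies in $\frac1n\overline{\mathbb{D}}^n$, yet $p_w(\lambda)=\lambda^n-\frac1n\sum_{i=1}^n\lambda^{n-i}$ vanishes at $\lambda=1$. So $w\notin\mathbb{G}_n$, and no $r\ge 1/n$ works.

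Your preliminary reduction via a Euclidean ball is correct but unnecessary once the direct argument is in place. Finally, you read $r\overline{\mathbb{D}}^n$ as the closed polydisc $\{|w_i|\le r\}$. The paper's stated definition of $r\mathbb{D}^n$ scales $\mathbb{G}_n$ rather than $\mathbb{D}^n$, but your reading is the one actually needed when Knese's inequality is applied to $f(rz_1,\dots,rz_n)$.
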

	
	Let us consider the following $n$-tuples of matrices.
	\begin{enumerate}
		\item For $n$ odd, $(S_1, \dots, S_n)$, where $S_i$ for $1 \le i \le n-1$ and $S_n$ are as follows:
		\begin{equation*}
			\begin{aligned}
				S_i &=
				\begin{cases}
					\left[
					\begin{matrix}
						0 & \delta_i\\
						0 & 0
					\end{matrix}\right], & 1 \le i \le \frac{n-1}{2},\\
					\left[
					\begin{matrix}
						0 & \delta_{n-i}\\
						0 & 0
					\end{matrix}\right], & \frac{n+1}{2} \le i \le n-1,
				\end{cases}
				\quad \text{and} \quad
				S_n =
				\left[
				\begin{matrix}
					0 & \delta\\
					0 & 0
				\end{matrix}\right].
			\end{aligned}
		\end{equation*}
		
		\item For $n$ even, $(S_1, \dots, S_n)$, where $S_i$ for $1 \le i \le n-1$ and $S_n$ are as follows:
		\begin{equation*}
			\begin{aligned}
				S_i &=
				\begin{cases}
					\left[
					\begin{matrix}
						0 & \delta_i\\
						0 & 0
					\end{matrix}\right], & 1 \le i \le \frac{n-2}{2},\\
					\left[
					\begin{matrix}
					0 & \delta_{n/2}\\
					0 & 0
					\end{matrix}\right], & i = n/2,\\
					\left[
					\begin{matrix}
						0 & \delta_{n-i}\\
						0 & 0
					\end{matrix}\right], & \frac{n+2}{2} \le i \le n-1,
				\end{cases}
				\quad \text{and} \quad
				S_n =
				\left[
				\begin{matrix}
					0 & \delta\\
					0 & 0
				\end{matrix}\right].
			\end{aligned}
		\end{equation*}
	\end{enumerate}
	
	We now prove one of our main result. In this theorem we provide a class of $\Gamma_n$-contraction whose fundamental operators do not commute.
	\begin{thm}\label{Main Theorem 1}
		Let $r$ be as in Lemma \ref{Lem 1}. Then the following hold:
		\begin{enumerate}
			\item For $n$ odd, the $n$-tuple of matrices $(S_1, \dots, S_n)$ is a $\Gamma_n$-contraction such that the fundamental operators do not satisfy all the conditions in \eqref{Commutativity Condition}, whenever $0 < |\delta_i|, |\delta| < r$ and $|\delta_i| \ne |\delta_j|$ for $1 \le i, j \le \frac{n-1}{2}$ or $\frac{n+1}{2} \le i, j \le n-1$ or $j \ne n-i$.
			
			\item For $n$ even, the $n$-tuple of matrices $(S_1, \dots, S_n)$ is a $\Gamma_n$-contraction such that the fundamental operators do not satisfy all the conditions in \eqref{Commutativity Condition}, whenever $0 < |\delta_i|, |\delta| < r$ and $|\delta_i| \ne |\delta_j|$ for $1 \le i, j \le \frac{n-2}{2}$ or $\frac{n+2}{2} \le i, j \le n-1$ or $j \ne n-i$ and $|\delta_{n/2}| \ne |\delta_i|$ for $i \ne n/2$.
		\end{enumerate}
	\end{thm}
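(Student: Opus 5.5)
The plan is to verify the two claims separately: first that $\Gamma_n$ is a spectral set for $(S_1,\dots,S_n)$, using Lemma \ref{Lem 1} together with Knese's inequality (Proposition \ref{Prop Knese}); then to compute the fundamental operators explicitly and check one of the conditions in \eqref{Commutativity Condition}. The $n$ odd and $n$ even cases are handled identically, so I write $\epsilon_k$ for the upper-right entry of $S_k$. By the definition of the tuple, $\epsilon_k=\epsilon_{n-k}$ for $1\le k\le n-1$.

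\emph{Spectral set.} Put $E=\left[\begin{smallmatrix}0&1\\0&0\end{smallmatrix}\right]$. Each $S_k$ is a multiple of $E$, so the tuple commutes, $E^2=0$, and the joint spectrum is $\{\mathbf 0\}\subset\Gamma_n$. For $f\in\mathcal O(\Gamma_n)$ the Taylor expansion at $\mathbf 0$ truncates, because every product of two of the $S_k$ vanishes. This gives
\[
f(S_1,\dots,S_n)=\begin{bmatrix} f(\mathbf 0) & b\\ 0 & f(\mathbf 0)\end{bmatrix},\qquad b=\sum_{k=1}^n \epsilon_k\,\partial_k f(\mathbf 0),
\]
where $\epsilon_n=\delta$. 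Normalize so that $\|f\|_{\infty,\Gamma_n}=1$; the case $|f(\mathbf 0)|=1$ is handled by the maximum principle. A matrix $\left[\begin{smallmatrix}a&b\\0&a\end{smallmatrix}\right]$ with $|a|<1$ is a contraction if and only if $|b|\le 1-|a|^2$. To get this bound, set $g(z)=f(rz)$. By Lemma \ref{Lem 1}, $g$ maps $\mathbb D^n$ into $\overline{\mathbb D}$, and in fact into $\mathbb D$ by the maximum principle. Applying Proposition \ref{Prop Knese} to $g$ at $z=\mathbf 0$ gives $r\sum_k|\partial_k f(\mathbf 0)|\le 1-|f(\mathbf 0)|^2$. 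Since $|\epsilon_k|<r$, we obtain $|b|\le r\sum_k|\partial_kf(\mathbf 0)|\le 1-|a|^2$. So $\rho_{\mathbf S}$ is contractive and $(S_1,\dots,S_n)$ is a $\Gamma_n$-contraction.

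\emph{Fundamental operators.} We have $S_n^*S_n=\operatorname{diag}(0,|\delta|^2)$, so $D_{S_n}=\operatorname{diag}(1,d)$ with $d=\sqrt{1-|\delta|^2}>0$. Hence $D_{S_n}$ is invertible, $\mathcal D_{S_n}=\mathbb C^2$, and by Theorem \ref{Existence and Uniqueness} we may simply take $F_i=D_{S_n}^{-1}(S_i-S_{n-i}^*S_n)D_{S_n}^{-1}$. A direct computation gives $S_{n-i}^*S_n=\operatorname{diag}(0,\bar\epsilon_{n-i}\delta)$, and therefore
\[
F_i=\begin{bmatrix}0 & \epsilon_i/d\\ 0 & -\bar\epsilon_{n-i}\delta/d^2\end{bmatrix}=:\begin{bmatrix}0&a_i\\0&c_i\end{bmatrix}.
\]
Matrices of this shape multiply as $F_iF_j=\left[\begin{smallmatrix}0&a_ic_j\\0&c_ic_j\end{smallmatrix}\right]$. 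So $[F_i,F_j]$ has the single entry
\[
a_ic_j-a_jc_i=-\frac{\delta}{d^3}\bigl(\epsilon_i\bar\epsilon_{n-j}-\epsilon_j\bar\epsilon_{n-i}\bigr).
\]
In the same way I will compute $F_iF_{n-j}^*-F_jF_{n-i}^*$ and $F_{n-j}^*F_i-F_{n-i}^*F_j$ entrywise. Their $(1,1)$ entries are $a_i\bar a_{n-j}-a_j\bar a_{n-i}$ on the left and $0$ on the right, which gives a second, independent test.

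\emph{Main obstacle.} The expected difficulty is showing that the hypotheses, which are stated only in terms of the moduli $|\epsilon_i|\ne|\epsilon_j|$, really force a violation. Using $\epsilon_{n-k}=\epsilon_k$, the $(1,1)$ discrepancy in the second condition is $d^{-2}(\epsilon_i\bar\epsilon_j-\epsilon_j\bar\epsilon_i)=2i\,d^{-2}\operatorname{Im}(\epsilon_i\bar\epsilon_j)$, and the commutator entry is a multiple of the same quantity. Both therefore vanish when $\epsilon_i$ and $\epsilon_j$ are real multiples of one another. My first step will be to recheck the index bookkeeping in the definition of the tuple and see whether some other entry, for instance the $(2,2)$ entries $c_i\bar c_{n-j}$ against $\bar a_{n-j}a_i+\bar c_{n-j}c_i$, produces a discrepancy proportional to $|\epsilon_i|^2-|\epsilon_j|^2$ or similar. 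Such a term would be nonzero exactly under the stated hypothesis. If no such term appears, I would supplement the hypothesis with the non-alignment condition $\operatorname{Im}(\epsilon_i\bar\epsilon_j)\ne0$ for some pair $i\ne j$. Under that condition both the commutativity and the second condition in \eqref{Commutativity Condition} fail.
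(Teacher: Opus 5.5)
Your spectral-set argument is correct and slightly cleaner than the paper's. The paper first composes $f$ with a disc automorphism to reduce to $f(\mathbf 0)=0$, which tacitly needs von Neumann's inequality to pull the bound back, and then uses only $r\sum_k|\partial_k f(\mathbf 0)|\le 1$. You instead keep $a=f(\mathbf 0)$ and use the exact criterion $|b|\le 1-|a|^2$ for $\left[\begin{smallmatrix}a&b\\0&a\end{smallmatrix}\right]$ to be a contraction. That criterion is exactly what Knese's inequality at the origin delivers. Your $D_{S_n}$ and $F_i$ agree with the paper's.

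The obstacle you flag is real, and you should resolve your hedge in favour of the second branch: no entry produces $|\epsilon_i|^2-|\epsilon_j|^2$. Since $\epsilon_{n-k}=\epsilon_k$, you have $F_{n-k}=F_k$, and the off-diagonal entries of $F_iF^*_{n-j}-F_jF^*_{n-i}$ cancel identically. Put $d^2=1-|\delta|^2$ and $\omega_{ij}=\epsilon_i\bar\epsilon_j-\bar\epsilon_i\epsilon_j$. Then
\[
\bigl(F_iF^*_{n-j}-F_jF^*_{n-i}\bigr)-\bigl(F^*_{n-j}F_i-F^*_{n-i}F_j\bigr)=\frac{\omega_{ij}}{d^{2}}\begin{bmatrix}1&0\\0&-1\end{bmatrix},
\]
\[
F_iF_j-F_jF_i=-\frac{\delta\,\omega_{ij}}{d^{3}}\begin{bmatrix}0&1\\0&0\end{bmatrix}.
\]
Hence the statement as written is false. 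Take all $\delta_k$ real with distinct moduli in $(0,r)$ and any $0<|\delta|<r$. Then every $F_k=\epsilon_k G$ with $G=\left[\begin{smallmatrix}0&1/d\\0&-\delta/d^2\end{smallmatrix}\right]$, and every condition in \eqref{Commutativity Condition} holds. For $n\le 3$ the statement fails for every admissible choice, since there is only one free parameter and $F_1=F_{n-1}$.

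The paper's proof breaks at exactly this point. It displays $F_iF_j$ and $F_jF_i$, which differ only through $\delta_i\bar\delta_j$ versus $\delta_j\bar\delta_i$, and declares them unequal on the strength of $|\delta_i|\ne|\delta_j|$. Its displayed off-diagonal entries $(\delta_j\delta_{n-i}-\delta_i\delta_{n-j})\bar\delta/(1-|\delta|^2)^{3/2}$ for the second condition are in fact zero once $S_{n-k}=S_k$ is used.

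The supplementary hypothesis you propose is the right repair: require $\operatorname{Im}(\epsilon_i\bar\epsilon_j)\ne 0$ for some pair of distinct free parameters. This needs $n\ge 5$ for $n$ odd and $n\ge 4$ for $n$ even. Under it your argument proves the corrected theorem, since that pair violates both the commutativity condition and the second condition in \eqref{Commutativity Condition}.
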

	
	\begin{proof}
		We only prove $(1)$. The proof of $(2)$ follows in the same line to that of $(1)$.
		
		Observe that $S_1, \dots, S_n$ are commuting matrices in $M_2(\mathbb{C})$, as the product of any two is zero. We know that the joint spectrum of a tuple of commuting matrices is the set of joint eigenvalues of the matrices. Since $S_1, \dots, S_n$ are nilpotent, the only eigenvalue of them is $0$. Thus, the only joint eigenvalue of the tuple $(S_1, \dots, S_n)$ is $(0, \dots, 0)$ with a joint eigenvector
		$\left[\begin{smallmatrix}
			x\\
			y
		\end{smallmatrix}\right]$ in $\mathbb{C}^2$. By routine computation, one can see that
		$\left[\begin{smallmatrix}
			1\\
			0
		\end{smallmatrix}\right]$ is one of the joint eigenvectors of $(S_1, \dots, S_n)$. Hence, $\sigma(S_1, \dots, S_n) = \{(0, \dots, 0)\} \subseteq \Gamma_n$.
		
		Let $f \in \mathcal{O}(\Gamma_n)$ such that $f(0, \dots, 0) = \alpha \in \mathbb{D} \setminus \{0\}$ and $||f||_{\infty, \Gamma_n} = 1$. Consider the Blaschke factor $\Psi_{\alpha}(z) = e^{i\theta}\frac{z - \alpha}{1 - \overline{\alpha}z}, z \in \mathbb{D}$ for some $\theta \in \mathbb{R}$. Since $\Psi_{\alpha} \in \operatorname{Aut}(\mathbb{D})$, we see that $\Psi_{\alpha} \circ f$ is an arbitrary holomorphic function from $\mathbb{G}_n$ to $\overline{\mathbb{D}}$. Then $(\Psi_{\alpha} \circ f)(0, \dots, 0) = 0$. We therefore assume that $f(0, \dots, 0) = 0$.
		
		As $S^2_i = S^2_n = 0$, by Taylor series expansion of $f$ and the holomorphic functional calculus, we obtain
		\begin{equation*}
			\begin{aligned}
				f(S_1, \dots, S_n) &=
				\left[
				\begin{matrix}
					0 & \Lambda\\
					0 & 0
				\end{matrix}\right],
			\end{aligned}
		\end{equation*}
		where $\Lambda = \displaystyle\sum_{i=1}^{\frac{n-1}{2}} \delta_i \frac{\partial f}{\partial z_i}(0, \dots, 0) + \displaystyle\sum_{i=\frac{n+1}{2}}^{n-1} \delta_{n-i} \frac{\partial f}{\partial z_i}(0, \dots, 0) + \delta \frac{\partial f}{\partial z_n}(0, \dots, 0)$. Since $||f||_{\infty, \Gamma_n} = 1$ and $0 < |\delta_i|, |\delta| < r$, we have
		\begin{equation}\label{Main 1}
			\begin{aligned}
				||f(S_1, \dots, S_n)||
				&= \left|\displaystyle\sum_{i=1}^{\frac{n-1}{2}} \delta_i \frac{\partial f}{\partial z_i}(0, \dots, 0) + \displaystyle\sum_{i=\frac{n+1}{2}}^{n-1} \delta_{n-i} \frac{\partial f}{\partial z_i}(0, \dots, 0) + \delta \frac{\partial f}{\partial z_n}(0, \dots, 0)\right|\\
				&\le r\left(\displaystyle\sum_{i=1}^{\frac{n-1}{2}} \left|\frac{\partial f}{\partial z_i}(0, \dots, 0)\right| + \displaystyle\sum_{i=\frac{n+1}{2}}^{n-1} \left|\frac{\partial f}{\partial z_i}(0, \dots, 0)\right| + \left|\frac{\partial f}{\partial z_n}(0, \dots, 0)\right|\right)\\
				&\le 1 - |f(0, \dots, 0)|^2 = 1 = ||f||_{\infty, \Gamma_n}.
			\end{aligned}
		\end{equation}
		The second inequality in \eqref{Main 1} follows by applying Proposition \ref{Prop Knese} on $f(rz_1, \dots, rz_n)$.
		
		Whenever $||f||_{\infty, \Gamma_n} \neq 1$, then by applying the same technique to the function $\widetilde{f} = \frac{f}{||f||_{\infty, \Gamma_n}}$ we get,
		\begin{equation}\label{Main 2}
			\begin{aligned}
				||\widetilde{f}(S_1, \dots, S_n)||
				&\le 1.
			\end{aligned}
		\end{equation}
		As $||f||_{\infty, \Gamma_n}$ is a scalar constant, $\frac{\partial f}{\partial z_i}(0, \dots, 0)$ is independent of $||f||_{\infty, \Gamma_n}$. Therefore, from \eqref{Main 2}, we deduce that
		\begin{equation*}
			\begin{aligned}
				||f(S_1, \dots, S_n)|| &\le ||f||_{\infty, \Gamma_n}.
			\end{aligned}
		\end{equation*}
		This shows that $(S_1, \dots, S_n)$ is a $\Gamma_n$-contraction such given $\delta_i$'s and $\delta$.
		
		First we compute the defect operator of $S_n$. Note that
		\begin{equation*}
			\begin{aligned}
				D^2_{S_n} &= I - S^*_nS_n =
				\left[
				\begin{matrix}
					1 & 0\\
					0 & 1
				\end{matrix}\right] -
				\left[
				\begin{matrix}
					0 & 0\\
					\overline{\delta} & 0
				\end{matrix}\right]
				\left[
				\begin{matrix}
					0 & \delta\\
					0 & 0
				\end{matrix}\right] =
				\left[
				\begin{matrix}
					1 & 0\\
					0 & 1 - |\delta|^2
				\end{matrix}\right].
			\end{aligned}
		\end{equation*}
		Thus, we have
		\begin{equation*}
			\begin{aligned}
				D_{S_n} &=
				\left[
				\begin{matrix}
					1 & 0\\
					0 & \sqrt{1 - |\delta|^2}
				\end{matrix}\right].
			\end{aligned}
		\end{equation*}
		Let $F_1, \dots, F_{n-1}$ be the fundamental operators of $(S_1, \dots, S_n)$. Since $D_{S_n}$ is invertible and $S_i = S_{n-i}$, by \eqref{Gamma_n Fundamental} we obtain
		\begin{equation*}
			\begin{aligned}
				F_i &= D^{-1}_{S_n}(S_i - S^*_{n-i}S_n)D^{-1}_{S_n} = D^{-1}_{S_n}(S_{n-i} - S^*_iS_n)D^{-1}_{S_n} = F_{n-i}, \quad 1 \le i \le \frac{n-1}{2}.
			\end{aligned}
		\end{equation*}
		By routine computation we deduce that
		\begin{equation*}
			\begin{aligned}
				F_i &= 
				\left[
				\begin{matrix}
					0 & \frac{\delta_i}{\sqrt{1 - |\delta|^2}}\\
					0 & -\frac{\overline{\delta}_i\delta}{1 - |\delta|^2}
				\end{matrix}\right] = F_{n-i}, \quad 1 \le i \le \frac{n-1}{2}.
			\end{aligned}
		\end{equation*}
		
		Thus, it follows that $F_i$ commutes with $F_{n-i}$, whereas for $i, j$ satisfying at least one of $1 \le i, j \le \frac{n-1}{2}, \,\, \frac{n+1}{2} \le i, j \le n-1$, and $j \ne n-i$,
		\begin{equation*}
			\begin{aligned}
				F_iF_j &=
				\left[
				\begin{matrix}
					0 & -\frac{\delta_i\overline{\delta}_j\delta}{(1 - |\delta|^2)^{3/2}}\\
					0 & -\frac{\overline{\delta}_i\overline{\delta}_j\delta^2}{(1 - |\delta|^2)^2}
				\end{matrix}\right] \ne
				\left[
				\begin{matrix}
					0 & -\frac{\delta_j\overline{\delta}_i\delta}{(1 - |\delta|^2)^{3/2}}\\
					0 & -\frac{\overline{\delta}_j\overline{\delta}_i\delta^2}{(1 - |\delta|^2)^2}
				\end{matrix}\right] = F_jF_i,
			\end{aligned}
		\end{equation*}
		showing that not all $F_i$'s commute with each other. Also one can check that
		\begin{equation*}
			\begin{aligned}
				F_iF^*_{n-j} - F_jF^*_{n-i} &=
				\left[
				\begin{matrix}
					\frac{\delta_i\overline{\delta}_{n-j} - \delta_j\overline{\delta}_{n-i}}{1 - |\delta|^2} & \frac{(\delta_j\delta_{n-i} - \delta_i\delta_{n-j})\overline{\delta}}{(1 - |\delta|^2)^{3/2}}\\
					\frac{(\overline{\delta}_j\overline{\delta}_{n-i} - \overline{\delta}_i\overline{\delta}_{n-j})\delta}{(1 - |\delta|^2)^{3/2}} & \frac{\overline{\delta}_i\delta_{n-j} - \overline{\delta}_j\delta_{n-i}}{(1 - |\delta|^2)^2}
				\end{matrix}\right]\\
				&\ne
				\left[
				\begin{matrix}
					0 & 0\\
					0 & \frac{\overline{\delta}_{n-j}\delta_i - \overline{\delta}_{n-i}\delta_j}{1 - |\delta|^2}
					+ \frac{(\delta_{n-j}\overline{\delta}_i - \delta_{n-i}\overline{\delta}_j)|\delta|^2}{(1 - |\delta|^2)^2}
				\end{matrix}\right]\\
				&= F^*_{n-j}F_i - F^*_{n-i}F_j, \quad j \ne n-i,
			\end{aligned}
		\end{equation*}
		and for $j = n-i$,
		\begin{equation*}
			\begin{aligned}
				F_iF^*_{n-j} - F_jF^*_{n-i} &= F^*_{n-j}F_i - F^*_{n-i}F_j.
			\end{aligned}
		\end{equation*}
		Thus, it is clear from this particular family of $\Gamma_n$-contraction, that not all the conditions in \eqref{Commutativity Condition} are necessary to have a $\Gamma_n$-isometric dilation. But the conditions in \eqref{Commutativity Condition} are being satisfied when $j = n-i$. This completes the proof.
	\end{proof}
	
	We now provide a family of $2 \times 2$ matrix $\Gamma_n$-contraction which shows that none of the conditions in \eqref{Commutativity Condition} is necessary for a $\Gamma_n$-contraction to obtain an $\Gamma_n$-isometric dilation. Before proceeding to construct the family, we define the following nilpotent matrices:
	\begin{equation}\label{T_i}
		\begin{aligned}
			T_i &=
			\left[
			\begin{matrix}
				0 & \lambda_i\\
				0 & 0
			\end{matrix}\right], \quad \lambda_i = s_i(\delta_1, \dots, \delta_n), \quad |\delta_i| < r/M, \quad 1 \le i \le n,
		\end{aligned}
	\end{equation}
	where $M = \max\{k(i) : 1 \le i \le n\}, \,\, r$ is as in Lemma \ref{Lem 1} and $\delta_1, \dots, \delta_n$ are such complex numbers that $\lambda_i\overline{\lambda}_{n-j} \ne \lambda_j\overline{\lambda}_{n-i}$ for $1 \le i, j \le n-1$. One can easily notice that $(\lambda_1, \dots, \lambda_n) \in \Gamma_n$. This implies that
	\begin{equation}\label{Lambda_i, Lambda_n Estimate}
		\begin{aligned}
			|\lambda_i| < r^i < 1 < k(i), \quad 1 \le i \le n-1, \quad \text{and} \quad |\lambda_n| < r^n/M^n < 1.
		\end{aligned}
	\end{equation}
	Now it is clear from \eqref{T_i} and \eqref{Lambda_i, Lambda_n Estimate} that
	\begin{equation*}
		\begin{aligned}
			||T_i|| = |\lambda_i| < k(i), \quad 1 \le i \le n-1, \quad \text{and} \quad ||T_n|| = |\lambda_n| < 1.
		\end{aligned}
	\end{equation*}
	
	We are now ready to prove our another main result.
	\begin{thm}\label{Main Theorem 2}
		Let $r$ be as in Lemma \ref{Lem 1}. Then the $n$-tuple of matrices $(T_1, \dots, T_n)$ is a $\Gamma_n$-contraction such that the fundamental operators do not satisfy any condition in \eqref{Commutativity Condition}, whenever $\delta_1, \dots, \delta_n$ are such complex numbers such that $0 < |\delta_i| < r/M$, where $M = \max\{k(i) : 1 \le i \le n\}$, for which $\lambda_i\overline{\lambda}_{n-j} \ne \lambda_j\overline{\lambda}_{n-i}$ for $1 \le i, j \le n-1$ and $\lambda_i \ne 0$ for $1 \le i \le n$.
	\end{thm}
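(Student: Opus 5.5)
The proof follows the same two-part scheme as Theorem \ref{Main Theorem 1}: first show that $(T_1, \dots, T_n)$ is a $\Gamma_n$-contraction, then compute the fundamental operators explicitly and test the conditions in \eqref{Commutativity Condition}.

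\textbf{The tuple is a $\Gamma_n$-contraction.} All $T_i$ are strictly upper triangular, so every product $T_iT_j$ is $0$. Hence they commute, and the joint spectrum is $\{(0, \dots, 0)\} \subseteq \Gamma_n$. Take $f \in \mathcal{O}(\Gamma_n)$ with $\|f\|_{\infty,\Gamma_n} = 1$, composed with a Blaschke factor if necessary so that $f(\mathbf 0) = 0$. Then
\[
f(T_1, \dots, T_n) = \left[\begin{matrix} 0 & \Lambda \\ 0 & 0 \end{matrix}\right], \qquad \Lambda = \sum_{i=1}^n \lambda_i \frac{\partial f}{\partial z_i}(\mathbf 0).
\]
By \eqref{Lambda_i, Lambda_n Estimate} (with $M \ge 1$), $|\lambda_i| \le r$ for every $i$. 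Applying Proposition \ref{Prop Knese} to $z \mapsto f(rz)$, which is defined on $\mathbb D^n$ by Lemma \ref{Lem 1}, at $z = 0$ gives $r \sum_i |\partial_i f(\mathbf 0)| \le 1$. Therefore $\|f(T)\| = |\Lambda| \le 1$. Scaling by $\|f\|_{\infty,\Gamma_n}$ handles general $f$. I would reuse the argument of Theorem \ref{Main Theorem 1} verbatim here.

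\textbf{The fundamental operators.} As before,
\[
D_{T_n} = \operatorname{diag}\bigl(1, \sqrt{1 - |\lambda_n|^2}\bigr),
\]
which is invertible because $|\lambda_n| < 1$. Write $c = \sqrt{1 - |\lambda_n|^2}$. Then $F_i = D_{T_n}^{-1}(T_i - T_{n-i}^* T_n) D_{T_n}^{-1}$. Since $T_{n-i}^* T_n = \left[\begin{smallmatrix} 0 & 0 \\ 0 & \overline{\lambda}_{n-i}\lambda_n \end{smallmatrix}\right]$, this gives
\[
F_i = \left[\begin{matrix} 0 & \lambda_i / c \\ 0 & -\overline{\lambda}_{n-i}\lambda_n / c^2 \end{matrix}\right], \qquad 1 \le i \le n-1.
\]

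\textbf{Commutativity fails.} A direct computation gives
\[
F_iF_j - F_jF_i = \left[\begin{matrix} 0 & \dfrac{\lambda_n(\lambda_j \overline{\lambda}_{n-i} - \lambda_i \overline{\lambda}_{n-j})}{c^3} \\ 0 & 0 \end{matrix}\right].
\]
This is nonzero since $\lambda_n \ne 0$ and $\lambda_i\overline{\lambda}_{n-j} \ne \lambda_j\overline{\lambda}_{n-i}$.

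\textbf{The second condition fails.} Compute $F_iF_{n-j}^* - F_jF_{n-i}^*$ and $F_{n-j}^*F_i - F_{n-i}^*F_j$ entrywise.
\begin{itemize}
\item The $(1,1)$ entry of the first expression is $(\lambda_i\overline{\lambda}_{n-j} - \lambda_j\overline{\lambda}_{n-i})/c^2$.
\item Every $F_k$ has zero first column, so $F_{n-j}^*F_i$ has zero first row and column except in the $(2,2)$ position. Hence the $(1,1)$ entry of the second expression is $0$.
\end{itemize}
The hypothesis $\lambda_i\overline{\lambda}_{n-j} \ne \lambda_j\overline{\lambda}_{n-i}$ therefore makes the two expressions differ. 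Compared with Theorem \ref{Main Theorem 1}, this also covers the case $j = n-i$. That is the point of the new family, and it is where the stronger hypothesis is needed.

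The main obstacle is not the algebra but the consistency of the hypotheses. As written, the condition $\lambda_i\overline{\lambda}_{n-j} \ne \lambda_j\overline{\lambda}_{n-i}$ fails trivially when $i = j$, so it must be read for $i \ne j$; the $i = j$ cases are vacuous for both conditions. One also needs some $\delta$'s that actually satisfy all the requirements. I would argue this by genericity: each excluded set is the zero set of a nonzero real-analytic function of $(\delta_1, \dots, \delta_n)$ on the open polydisc of radius $r/M$, so the admissible parameters form a dense open set. Nontriviality would be checked on a small example, e.g.\ $\delta$'s with distinct moduli and generic arguments.
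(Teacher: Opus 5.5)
Your proposal is correct and follows the paper's proof essentially step for step. The joint spectrum is trivial by nilpotency, and Knese's Schwarz lemma applied to $f(rz)$ gives the spectral-set inequality. One then computes $F_i=D_{T_n}^{-1}(T_i-T_{n-i}^*T_n)D_{T_n}^{-1}$ explicitly and compares entries: the $(1,2)$ entry of $F_iF_j-F_jF_i$, and the $(1,1)$ entries of the two sides of the second identity. Your version is slightly cleaner than the paper's displayed formulas in two places: the $(2,2)$ entry of $F_i$ is indeed $-\overline{\lambda}_{n-i}\lambda_n/(1-|\lambda_n|^2)$, and, as you note, the non-degeneracy hypothesis can only be read for $i\neq j$.
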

	
	\begin{proof}
		In a similar manner to the proof of Theorem \ref{Main Theorem 1}, one can easily see that the joint spectrum of $(T_1, \dots, T_n) = \{(0, \dots, 0)\} \subseteq \Gamma_n$. Also assume that $f(0, \dots, 0) = 0$ and $||f||_{\infty, \Gamma_n} = 1$.
		
		Since $T^2_i = 0$, by Taylor series expansion of $f$ and the holomorphic functional calculus, we get
		\begin{equation*}
			\begin{aligned}
				f(T_1, \dots, T_n) &=
				\left[
				\begin{matrix}
					0 & \Delta\\
					0 & 0
				\end{matrix}\right],
			\end{aligned}
		\end{equation*}
		where $\Delta = \displaystyle\sum_{i=1}^{n} \lambda_i\frac{\partial f}{\partial z_i}(0, \dots, 0)$. Because $||f||_{\infty, \Gamma_n} = 1$ and $0 < |\delta_i| < r/M$, by applying Proposition \ref{Prop Knese} on $f(rz_1, \dots, rz_n)$ we have
		\begin{equation}\label{Main 3}
			\begin{aligned}
				||f(T_1, \dots, T_n)||
				&= \left|\displaystyle\sum_{i=1}^{n} \lambda_i\frac{\partial f}{\partial z_i}(0, \dots, 0)\right|
				\le r\displaystyle\sum_{i=1}^{n} \left|\frac{\partial f}{\partial z_i}(0, \dots, 0)\right| \le 1 - |f(0, \dots, 0)|^2 = 1.
			\end{aligned}
		\end{equation}
		
		If $||f||_{\infty, \Gamma_n} \neq 1$, then proceeding similarly as in Theorem \ref{Main Theorem 1} to the function $\widetilde{f} = \frac{f}{||f||_{\infty, \Gamma_n}}$, we get
		\begin{equation}\label{Main 4}
			\begin{aligned}
				||\widetilde{f}(T_1, \dots, T_n)||
				&\le 1.
			\end{aligned}
		\end{equation}
		Since $||f||_{\infty, \Gamma_n}$ is constant, $\frac{\partial f}{\partial z_i}(0, \dots, 0)$ is independent of $||f||_{\infty, \Gamma_n}$. Therefore, \eqref{Main 4} yields that
		\begin{equation*}
			\begin{aligned}
				||f(T_1, \dots, T_n)|| &\le ||f||_{\infty, \Gamma_n}.
			\end{aligned}
		\end{equation*}
		Hence, $(T_1, \dots, T_n)$ is a $\Gamma_n$-contraction. Note that
		\begin{equation*}
			\begin{aligned}
				D^2_{T_n} &= I - T^*_nT_n =
				\left[
				\begin{matrix}
					1 & 0\\
					0 & 1
				\end{matrix}\right] -
				\left[
				\begin{matrix}
					0 & 0\\
					\overline{\lambda}_n & 0
				\end{matrix}\right]
				\left[
				\begin{matrix}
					0 & \lambda_n\\
					0 & 0
				\end{matrix}\right] =
				\left[
				\begin{matrix}
					1 & 0\\
					0 & 1 - |\lambda_n|^2
				\end{matrix}\right].
			\end{aligned}
		\end{equation*}
		Thus, we have
		\begin{equation*}
			\begin{aligned}
				D_{T_n} &=
				\left[
				\begin{matrix}
					1 & 0\\
					0 & \sqrt{1 - |\lambda_n|^2}
				\end{matrix}\right].
			\end{aligned}
		\end{equation*}
		Suppose $F_1, \dots, F_{n-1}$ are the fundamental operators of $(T_1, \dots, T_n)$. As $D_{T_n}$ is an invertible matrix, by \eqref{Gamma_n Fundamental} we obtain
		\begin{equation*}
			\begin{aligned}
				F_i &= D^{-1}_{T_n}(T_i - T^*_{n-i}T_n)D^{-1}_{T_n}, \quad 1 \le i \le n-1.
			\end{aligned}
		\end{equation*}
		By routine computation we deduce that
		\begin{equation*}
			\begin{aligned}
				F_i &= 
				\left[
				\begin{matrix}
					0 & \frac{\lambda_i}{\sqrt{1 - |\lambda_n|^2}}\\
					0 & -\frac{\overline{\lambda}_i\lambda_n}{1 - |\lambda_n|^2}
				\end{matrix}\right], \quad 1 \le i \le n-1.
			\end{aligned}
		\end{equation*}
		
		Observe that
		\begin{equation*}
			\begin{aligned}
				F_iF_j &=
				\left[
				\begin{matrix}
					0 & -\frac{\lambda_i\overline{\lambda}_{n-j}\lambda_n}{(1 - |\lambda_n|^2)^{3/2}}\\
					0 & -\frac{\overline{\lambda}_{n-i}\overline{\lambda}_{n-j}\lambda^2_n}{(1 - |\lambda_n|^2)^2}
				\end{matrix}\right] \ne
				\left[
				\begin{matrix}
					0 & -\frac{\lambda_j\overline{\lambda}_{n-i}\lambda_n}{(1 - |\lambda_n|^2)^{3/2}}\\
					0 & -\frac{\overline{\lambda}_{n-j}\overline{\lambda}_{n-i}\lambda^2_n}{(1 - |\lambda_n|^2)^2}
				\end{matrix}\right] = F_jF_i, \quad 1 \le i \ne j \le n-1,
			\end{aligned}
		\end{equation*}
		which shows that $F_i$'s do not commute. Furthermore,
		\begin{equation*}
			\begin{aligned}
				F_iF^*_{n-j} - F_jF^*_{n-i} &=
				\left[
				\begin{matrix}
					\frac{\lambda_j\overline{\lambda}_{n-i} - \lambda_i\overline{\lambda}_{n-j}}{1 - |\lambda_n|^2} & 0\\
					0 & \frac{\overline{\lambda}_j\lambda_{n-i} - \overline{\lambda}_i\lambda_{n-j}}{(1 - |\lambda_n|^2)^2}
				\end{matrix}\right]\\
				&\ne
				\left[
				\begin{matrix}
					0 & 0\\
					0 & \frac{\lambda_j\overline{\lambda}_{n-i} - \lambda_i\overline{\lambda}_{n-j}}{(1 - |\lambda_n|^2)^2} + \frac{(\lambda_i\overline{\lambda}_{n-j} - \lambda_j\overline{\lambda}_{n-i})|\lambda_n|^2}{(1 - |\lambda_n|^2)^2}
				\end{matrix}\right]\\
				&= F^*_{n-j}F_i - F^*_{n-i}F_j
			\end{aligned}
		\end{equation*}
		for $1 \le i, j \le n-1$. Therefore, the $\Gamma_n$-contraction extends to a $\Gamma_n$-isometry, whereas the fundamental operators do not satisfy the conditions in \eqref{Commutativity Condition}. This completes the proof.
	\end{proof}
	
	\begin{rem}\label{Rem 1}
		While Theorem \ref{Main Theorem 1} implies that some conditions in \eqref{Commutativity Condition} are satisfied, the $\Gamma_n$-contraction, presented in Theorem \ref{Main Theorem 2}, does not have fundamental operators that satisfy at least one condition in \eqref{Commutativity Condition}. This means that those conditions are sufficient for constructing an explicit dilation of a given $\Gamma_n$-contraction, whereas the fundamental operators are exempt from satisfying any of them.
	\end{rem}
	
	An explicit construction of $\Gamma_n$-isometric dilation of a $\Gamma_n$-contraction in the presence of commuting fundamental operators is known to us \cite[Theorem 6.6, Corollary 6.7]{A. Pal}, whereas no such development is known for the $\Gamma_n$-contraction with non-commutative fundamental operators. It can be noted that while Theorem \ref{2 times 2 Gamma_n-Contraction} ensures the existence of a dilation of a $2 \times 2$ matrix $\Gamma_n$-contraction, it does not provide an explicit formulation of the dilation. Therefore, would be interesting to construct the dilation for $2 \times 2$ matrix $\Gamma_n$-contractions with non-commutative fundamental operators that we presented in Theorem \ref{Main Theorem 1} and Theorem \ref{Main Theorem 2}.
	
	\vspace{0.3cm}
	
	\noindent{\textbf{Acknowledgement:}} The author thanks Dr. Avijit Pal for carefully reading the first draft of this article.


\begin{thebibliography}{BNS}			
		\bibitem{Agler}
		J. Agler, 
		\textit{Operator Theory and the Caratheodory Metric}, Invent. Math., \textbf{101} (2) (1990), 483--500.
		
		\bibitem{McCarthy 1}
		J. Agler, J. E. McCarthy,
		\textit{Pick Interpolation and Hilbert Function Spaces}, Grad.
		Stud. Math., Amer. Math. Soc., Providence, RI, \textbf{44} (2002).
%
%
		\bibitem{Arveson}
		W. Arveson,
		\textit{Subalgebras of $C^*$-Algebras. II}, Acta Math. \textbf{128} (3--4) (1972) 271--308.
		
		\bibitem{Bhattacharyya}
		T. Bhattacharyya, 
		\textit{The Tetrablock as a Spectral Set,}  Indiana Univ. Math. J., \textbf{63} (2014), 1601--1629.
		
		\bibitem{Bhowmik}
		M. Bhowmik, 
		\textit{A Family of $2 \times 2$ Tetrablock Contraction with Non-commuting Fundamental Operators}, Linear Algebra Appl., \textbf{746} (2026), 205--212.
		
		\bibitem{Biswas}
		S. Biswas and S. Shyam Roy, \textit{Functional models for $\Gamma_n$-contractions and characterization of $\Gamma_n$-isometries}, J. Func. Anal., \textbf{266} (2014), 6224--6255.
		
		\bibitem{Costara}
		C. Costara,
		\textit{On the spectral Nevanlinna-Pick problem}, Studia Math., \textbf{170} (2005), 23--55.
		
		\bibitem{Knese}
		G. Knese,
		\textit{A Schwarz lemma on the Polydisk}, Proc. Am. Math. Soc, \textbf{135} (9) (2007), 2759-2768.
		
%

		\bibitem{Mandal}
		S. Mandal, A.Pal,
		\textit{Necessary Conditions of $\Gamma_n$-Isometry Dilation and the Dilation of a Certain Family of $\Gamma_3$-Contractions}, Complex Anal. Oper. Theory, \textbf{19}, 28 (2025).
				
		\bibitem{Nagy}
		B. Sz.-Nagy, C. Foias, H. Bercovici, L. Kerchy,
		\textit{Harmonic Analysis of Operators on Hilbert Space}, Universitext,  Springer, (2010).
		
		\bibitem{S. Pal}
		S. Pal,
		\textit{The Fundamental Operator Tuples Associated with the Symmetrized Polydisc}, New York J. Math., \textbf{27} (2021), 349-362.
		
		\bibitem{A. Pal}
		A. Pal,
		\textit{On $\Gamma_n$-Contractions and Their Conditional Dilations}, J. Math. Anal. and Appl., {\bf {510}} (2022), 1-36.
		
		\bibitem{paulsen}
		V. Paulsen,
		\textit{Completely Bounded Maps and Operator Algebras,}
		Cambridge  Univ. Press, (2002).
		
		\bibitem{Schaffer}
		J. J. Sch\"{a}ffer,
		\textit{On Unitary Dilations of Contractions}, Proceedings of the American Mathematical Society, (1954).
	\end{thebibliography}
\end{document}